\DeclareMathOperator{\Aut}{Aut}
\DeclareMathOperator{\Sym}{Sym}
\DeclareMathOperator{\Alt}{Alt}
\DeclareMathOperator{\soc}{soc}
\title{Finite Groups that are the union of at most $25$ proper subgroups}
\author{Martino Garonzi, \\Dipartimento di Matematica Pura ed Applicata, \\Via Trieste 63, 35121 Padova, Italy. \\E-mail address: mgaronzi@studenti.math.unipd.it}
\date{}
\begin{document}
\maketitle
\newtheorem{defi}{Definition}
\newtheorem{teor}{Theorem}
\newtheorem{cor}{Corollary}
\newtheorem{prop}{Proposition}
\newtheorem{lemma}{Lemma}
\newtheorem{oss}{Observation}
\begin{abstract}
For a finite group $G$ let $\sigma(G)$ (the ``sum'' of $G$) be the least number of proper subgroups of $G$ whose set-theoretical union is equal to $G$, and $\sigma(G)=\infty$ if $G$ is cyclic. We say that a group $G$ is $\sigma$-elementary if for every non-trivial normal subgroup $N$ of $G$ we have $\sigma(G)<\sigma(G/N)$. In this article we produce the list of all the $\sigma$-elementary groups of sum up to $25$. We also show that $\sigma(\Aut(PSL(2,8)))=29$.
\end{abstract}
\section{Introduction} \label{i}
Let $G$ be a finite group. If $G$ is not cyclic we can consider the ``covers'' of $G$, the families of proper subgroups of $G$ whose set-theoretical union is equal to $G$, and we can define $\sigma(G)$ (the ``sum'' of $G$, a concept introduced by Cohn in 1994: see \cite{cohn}) to be the least cardinality of a cover of $G$, i.e. the cardinality of a minimal cover of $G$. If $G$ is cyclic then the sum is not well defined because no proper subgroup contains any generator of $G$; in this case we define $\sigma(G)=\infty$, with the convention that $n < \infty$ for every integer $n$. An easy result is that if $N \unlhd G$ then $\sigma(G) \leq \sigma(G/N)$, because every cover of $G/N$ corresponds to a cover of $G$.
\begin{defi}[$\sigma$-elementary groups]
$G$ is said to be $\sigma$-elementary if for every non trivial normal subgroup $N$ of $G$ we have $\sigma(G)<\sigma(G/N)$. $G$ is said to be $n$-elementary if $G$ is $\sigma$-elementary and $\sigma(G)=n$.
\end{defi}
It is an easy exercise to show that $\sigma(G) \neq 2$ for every group $G$. In this article we find all the $n$-elementary groups for every $3 \leq n \leq 25$. We will produce an explicit tabular of them. In particular we obtain that:
\begin{teor}
A finite $\sigma$-elementary non-abelian group $G$ with $\sigma(G) \leq 25$ is either of affine type or almost-simple with socle of prime index.
\end{teor}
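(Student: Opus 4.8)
The plan is to combine the reduction theory relating $\sigma$ to monolithic primitive sections with explicit lower bounds for $\sigma$ that exploit the hypothesis $\sigma(G)\le 25$. Two normalizations come first. In a minimal cover of $G$ one may enlarge each member to a maximal subgroup, and every maximal subgroup contains $\Frat(G)$; hence for any $1\ne N\le\Frat(G)$ a minimal cover of $G$ is also a cover of $G/N$, so $\sigma(G)\ge\sigma(G/N)$, contradicting $\sigma$-elementarity. Thus $\Frat(G)=1$. Then, by the reduction theory, a non-abelian $\sigma$-elementary group is monolithic primitive: $G$ has a unique minimal normal subgroup $N=\soc(G)$ with $C_G(N)\le N$. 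Either (i) $N$ is elementary abelian and $C_G(N)=N$, so $G=N\rtimes(G/N)$ with $G/N\le\Aut(N)$ acting faithfully and irreducibly, that is, $G$ is of affine type and we are done; or (ii) $N=S_1\times\cdots\times S_k$ with the $S_i$ all isomorphic to a fixed non-abelian simple group $S$, $C_G(N)=1$, and $G$ permuting the $k$ factors transitively. (The solvable case falls under (i), by Tomkinson's theorem.) It remains, in case (ii), to prove $k=1$ and that $\soc(G)$ has index $1$ or a prime in $G$.

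In case (ii) with $k\ge 2$ I would show $\sigma(G)\ge 26$, which excludes it. One splits $G$ according to the O'Nan--Scott type of a monolithic primitive group with non-abelian, non-simple socle --- simple diagonal, compound diagonal, product action, twisted wreath --- and in each type exhibits enough maximal subgroups that cannot be combined economically into a small cover: the component projections, the diagonal (``graph'') subgroups of $S^k$, which split into many distinct $G$-conjugacy classes indexed by $\Aut(S)$, and the maximal subgroups of $S\wr\Sym(k)$-shape. Since $|S|\ge 60$, these already force more than $25$ subgroups. I expect this to be the main obstacle: a crude estimate does not settle the smallest configurations --- socle $A_5\times A_5$, $A_5^3$, $PSL(2,7)^2$ and their diagonal-type relatives --- where one must count the relevant subgroups exactly (using, for instance, $\Aut(A_5)=\Sym(5)$) and check directly that no economical cover of $G$ exists.

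Finally, in case (ii) with $k=1$ we have $S\le G\le\Aut(S)$, and I would argue that $[G:\soc(G)]$ is $1$ or a prime. Since $C_G(S)=1$, every nontrivial normal subgroup of $G$ contains $S$, so the proper nontrivial quotients of $G$ are exactly the quotients of $G/S$. Known lower bounds for $\sigma$ of almost simple groups bound $|S|$ and restrict $S$ to an explicit finite list, for each member of which $\Out(S)$ is a small, explicitly known group. If $G/\soc(G)$ were non-cyclic, then $\sigma(G/\soc(G))$ would be a small positive integer $m$, so $\sigma$-elementarity would force the finite integer $\sigma(G)$ to satisfy $\sigma(G)\le m-1$; but $[G,G]\supseteq[S,S]=S=\soc(G)$, so $G/[G,G]$ is a quotient of $G/\soc(G)$, which sharply constrains the admissible small values of $\sigma(G)$ (for example, $\sigma(H)=3$ precisely when $C_2\times C_2$ is a quotient of $H$), and one checks that the resulting inequalities are inconsistent in every case. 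If $G/\soc(G)$ were cyclic of composite order, then $G$ is automatically $\sigma$-elementary (all its proper nontrivial quotients being cyclic), but the covering-number estimates already used give $\sigma(G)>25$, so the hypothesis excludes this too. Hence $\soc(G)$ has index $1$ or a prime in $G$, which together with case (i) completes the proof.
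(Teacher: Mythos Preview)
Your assertion that ``by the reduction theory, a non-abelian $\sigma$-elementary group is monolithic primitive'' is the main gap. The structural results actually available (Theorem~\ref{sigmaprim}: trivial Frattini subgroup and center, at most one abelian minimal normal subgroup, $G$ a subdirect product of the primitive monolithic groups $X_i$) do \emph{not} force a unique minimal normal subgroup in general. The paper establishes monolithicity only under the extra hypothesis $\sigma(G)\le 33$ (Lemma~\ref{33}), and the argument is not soft: one combines the inequality $\sigma(G)\ge\sum_i l_{X_i}(G_i)$ of Lemma~\ref{tanto} with Proposition~\ref{1e2}, which says that if $|X_i/\soc(X_i)|$ is $1$ or a prime then already $G=X_i$. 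Running through the primitive groups of degree $\le 16$ shows that every monolithic $X_i$ with non-abelian socle and $l_{X_i}(\soc(X_i))\le 16$ has $|X_i/\soc(X_i)|\in\{1,2,3\}$ (or produces a quotient of $G$ with $\sigma=3$), so any such factor forces $G=X_i$ and $n=1$; otherwise each non-abelian summand contributes at least $17$, and together with Lemma~\ref{tanto}(2) for a possible abelian factor this yields $n=1$ whenever $\sigma(G)\le 33$. You have skipped precisely this step.

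Even granting monolithicity, your O'Nan--Scott casework for $k\ge 2$ and your $\Out(S)$ analysis for $k=1$ are avoidable and are not the paper's route. Once $G$ is monolithic, $\sigma(G)\ge l_G(\soc(G))$ gives $G$ a faithful primitive action of degree at most $24$. But every primitive group of degree $<25$ with non-abelian socle is already almost-simple: the smallest product-action degree is $5^2=25$ and the smallest diagonal-action degree is $|\Alt(5)|=60$. So $k=1$ drops out of the low-degree classification for free, with none of the delicate $\Alt(5)^2$, $\Alt(5)^3$, $PSL(2,7)^2$ computations you anticipate. The paper then simply tabulates $\sigma$ (or adequate lower bounds) for each almost-simple primitive group of degree $\le 24$ and reads off the survivors with $\sigma\le 25$; in every surviving case $|G/\soc(G)|\in\{1,2\}$, which is exactly the ``socle of prime index'' conclusion. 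Your outline for $k=1$ (bounding $|S|$, then arguing via $\sigma(G/\soc(G))$ and abelianizations) could in principle be made to work, but it is both vaguer and less efficient than this direct tabulation, and the ``covering-number estimates already used'' that you invoke for the cyclic-composite case are never actually produced in your sketch.
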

In 1926 Scorza proved that $\sigma(G)=3$ if and only if $C_2 \times C_2$ is an epimorphic image of $G$ (cf. \cite{scorza}): in our terminology $C_2 \times C_2$ is the unique $3$-elementary group. Cohn in 1994 found all the $\sigma$-elementary groups with sum up to $6$ (cf. \cite{cohn}). Tomkinson in 1997 found the sum of the solvable groups: he found that if $G$ is a finite solvable non-cyclic group then $\sigma(G)=q+1$ where $q$ is the least order of a chief factor of $G$ with more than a complement (cf. \cite{tomkinson}). It turned out that it is interesting to ask whether a sum can be not of the form $q+1$ with $q$ a prime power. Tomkinson found the answer for the first such integer, $7$: he showed that $\sigma(G) \neq 7$ for every finite group $G$, i.e. that $7$ is not a sum (cf. \cite{tomkinson}). Some time later the next three integer of this kind, $11$, $13$ and $15$, were solved: in 1999, R.A. Bryce, V. Fedri and L. Serena found that $\sigma(PSL(3,2))=15$ (cf. \cite{BFS}); in 2007 Abdollahi, Ashraf, Shaker showed that $\sigma(\Sym(6))=13$ (cf. \cite{shaker}); in 2008 Lucchini and Detomi showed that $11$ is not a sum (cf. \cite{lucchini}). The integers between $16$ and $25$ which are not of the form $q+1$ with $q$ a prime power are $16,19,21,22,23,25$. In this work we find in particular that:
\begin{teor}
The integers between $16$ and $25$ which are not sums are $19,21,22,25$.
\end{teor}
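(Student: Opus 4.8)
\emph{Proof proposal.} The plan is to deduce the statement from the classification of $\sigma$-elementary groups of sum at most $25$ obtained in this paper (Theorem~1 and the accompanying table). The first step is the standard reduction: if $\sigma(G)=n<\infty$, choose among the normal subgroups $N\unlhd G$ with $\sigma(G/N)=n$ one, $N_{0}$, of largest order; then for every non-trivial $M/N_{0}\unlhd G/N_{0}$ we have $\sigma(G/M)\geq\sigma(G/N_{0})=n$ (monotonicity of $\sigma$ under quotients) and $\sigma(G/M)\neq n$ (maximality of $N_{0}$), so $G/N_{0}$ is $\sigma$-elementary of sum $n$. Hence an integer $n\leq 25$ is a sum if and only if it is $\sigma(H)$ for some $\sigma$-elementary group $H$, and everything comes down to inspecting the classified list.

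For the positive half one must exhibit a witness of each of the sums $16,17,18,20,23,24$. The values $17,18,20,24$ are of the form $q+1$ with $q$ a prime power ($q=16,17,19,23$), so by Tomkinson's theorem \cite{tomkinson} they are realized by solvable groups: for instance $\mathbb{F}_{16}\rtimes C_{5}$, with $C_{5}$ acting fixed-point-freely on the additive group of $\mathbb{F}_{16}$, has sum $17$, while $C_{p}\times C_{p}$ has sum $p+1$, which yields $18,20,24$ for $p=17,19,23$. The remaining values $16$ and $23$ are not of the form $q+1$ with $q$ a prime power, so they require non-solvable witnesses, and these appear in the classification: $\sigma(\Sym(5))=16$ (equivalently $\sigma(\Alt(6))=16$) and $\sigma(M_{11})=23$.

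For the negative half, by the reduction it suffices to show that no $\sigma$-elementary group has sum in $\{19,21,22,25\}$. A solvable group has sum $q+1$ with $q$ a prime power by Tomkinson's theorem \cite{tomkinson}, and none of $18,20,21,24$ is a prime power; so a putative witness $G$ is non-solvable, in particular non-abelian, whence by Theorem~1 it is of affine type or (possibly simple) almost-simple with $[G:\soc(G)]$ equal to $1$ or a prime. In the affine case $G=V\rtimes H$ with $H$ acting irreducibly, faithfully and non-solvably on the elementary abelian group $V$; using that $G$ is $\sigma$-elementary (so $\sigma(G)<\sigma(H)$) together with a counting argument for how the subgroups of a cover meet $V$, one shows that $\sigma(G)\leq 25$ forces $|V|$ to be small, which confines $H$ to the finitely many irreducible non-solvable subgroups of a small general linear group; each resulting group $V\rtimes H$ is then checked directly to have sum outside $\{19,21,22,25\}$. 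In the almost-simple case, writing $S=\soc(G)$, lower bounds for $\sigma(G)$ --- obtained by isolating a conjugacy class of elements, such as the elements of defining characteristic in $PSL(2,q)$, whose only maximal overgroups form a single family of many conjugate subgroups, all of which must then appear in every cover --- grow with $|S|$ and reduce the possibilities for $S$ to a short explicit list: $\Alt(n)$ for small $n$, $PSL(2,q)$ and $PGL(2,q)$ for small $q$, $PSL(3,2)$, $M_{11}$, and a few others. For each surviving $G$ one determines $\sigma(G)$ exactly --- obtaining sums such as $10$ for $\Alt(5)$, $13$ for $\Sym(6)$, $15$ for $PSL(3,2)$, $16$ for $\Sym(5)$, $23$ for $M_{11}$ --- and checks that none of these equals $19,21,22$ or $25$.

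The main obstacle is the almost-simple case. Making the lower bounds sharp enough to cut the infinitely many simple groups down to a genuinely finite list of candidate socles is delicate, since several families of $PSL(2,q)$, $PGL(2,q)$ and their extensions by field automorphisms have sum just below or just above $25$ --- the value $\sigma(\Aut(PSL(2,8)))=29$ in the abstract is a case in point --- so the estimates must be precise enough to capture these borderline families while still terminating. Once the finite list is in hand, computing $\sigma(G)$ exactly is an integer set-covering problem over the lattice of maximal subgroups of $G$, requiring both an explicit economical cover and a matching optimality certificate; for the borderline groups, pinning the optimum down to the exact integer --- rather than to an interval that might straddle one of $19,21,22,25$ --- demands careful analysis of the conjugacy classes of maximal subgroups and of their mutual intersections, in practice with computer assistance. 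The affine case is lighter, but still needs the growth estimate for $\sigma(V\rtimes H)$ in terms of $|V|$ made quantitative, together with the finitely many verifications it produces.
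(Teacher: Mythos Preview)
Your overall strategy is correct and coincides with the paper's: reduce to $\sigma$-elementary groups, classify those of sum at most $25$, and read the answer off the resulting table. The positive witnesses you list ($\Sym(5)$ or $\Alt(6)$ for $16$; $2^4{:}5$ for $17$; $C_p\times C_p$ for $p=17,19,23$; $M_{11}$ for $23$) are exactly the paper's.

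Where you diverge is in how the finiteness of the classification is secured. You split into affine versus almost-simple and propose separate, somewhat ad hoc bounds: a counting argument inside $V$ to cap $|V|$ in the affine case, and family-by-family element-class estimates to cap $|S|$ in the almost-simple case. The paper instead uses one uniform device: after Lemma~\ref{33} shows that a non-abelian $\sigma$-elementary group $G$ of sum $\le 33$ is monolithic and primitive, Lemma~\ref{tanto} gives $\sigma(G)\ge l_X(\soc G)$, the minimal index of a proper supplement of the socle --- that is, the smallest degree of a faithful primitive action of $G$. So $\sigma(G)\le 25$ forces $G$ to appear among the primitive permutation groups of degree at most $24$, a finite list one can run through directly (Section~\ref{prim}). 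This single bound subsumes both of your cases (for abelian socle $l_X(N)=|N|$, recovering your affine bound) and replaces the ``lower bounds grow with $|S|$'' heuristic --- which, as you note, would otherwise require delicate control over each family of simple groups and their automorphism extensions --- by a straight lookup in the list of primitive groups of small degree. Your route would reach the same conclusion, but the primitive-degree bound is what turns the verification from an open-ended estimate into a finite, systematic check.
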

We also obtain the following:
\begin{teor}
$\sigma(\Aut(PSL(2,8)))=29$.
\end{teor}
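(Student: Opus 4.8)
The plan is to pin down $G := \Aut(PSL(2,8))$ explicitly and then prove the two inequalities $\sigma(G)\le 29$ and $\sigma(G)\ge 29$ separately. Since $\Out(PSL(2,8))$ is cyclic of order $3$, generated by the Frobenius automorphism $\varphi$ of $\mathbb{F}_8$, we have $G=P\Gamma L(2,8)=PSL(2,8)\rtimes\langle\varphi\rangle$, of order $1512$, with socle $L:=PSL(2,8)$ of index $3$ and $G/L$ cyclic. From Dickson's description of the subgroups of $PSL(2,8)$ one reads off the conjugacy classes of maximal subgroups of $G$: the socle $L$ (index $3$); a Borel subgroup $B\cong 2^3{:}7{:}3$ (index $9$); the normalizer $M:=N_G(P)$ of a Sylow $3$-subgroup $P$, which is $\cong 9{:}6$ of order $54$ (index $28$); and the normalizer of a Sylow $7$-subgroup, $\cong 7{:}6$ of order $42$ (index $36$). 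I would also record the following facts: the Sylow $3$-subgroups of $G$ form a T.I.\ family (a short count of the elements of $3$-power order, using that the non-split tori of $L$ are a T.I.\ family, confirms this); $P\cap L$ is always a non-split torus $C_9\le L$, $P$ is the unique Sylow $3$-subgroup of $N_G(P)=N_G(P\cap L)$, and hence $P\mapsto P\cap L$ is a bijection between the $28$ Sylow $3$-subgroups of $G$ and the $28$ non-split tori of $L$; and the elements of $G\setminus L$ have order $3$, $6$ or $9$ only (from the character table of $P\Gamma L(2,8)$, or a direct norm computation in the cosets $L\varphi^{\pm 1}$, which also gives the counts $168$, $504$, $336$ of such elements).

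For the upper bound I would establish the structural statement that each element of $G\setminus L$ lies in exactly one conjugate of $M\cong 9{:}6$, so that the $28$ conjugates of $M$ partition $G\setminus L$ into $28$ blocks of size $36$. For an element $g$ of order $3$ or $9$ outside $L$ this is immediate: $g$ lies in a unique Sylow $3$-subgroup $P$, and $\langle g\rangle\le N_G(P)$ since any $3$-element of $N_G(P)$ lies in its unique Sylow $3$-subgroup $P$. For an element $y$ of order $6$ outside $L$ write $y=y^{-2}\cdot y^3$, where $y^2$ has order $3$ (again outside $L$) and so lies in a unique Sylow $3$-subgroup $P$, and $y^3$ is an involution commuting with $y^2$; since $y^3$ fixes $y^2$ it permutes the Sylow $3$-subgroups containing $y^2$, of which there is only one, so $y^3\in N_G(P)$, whence $y\in N_G(P)$. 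Granting this, $\{L\}\cup\{\text{the }28\text{ conjugates of }M\}$ is a cover of $G$ of size $29$, so $\sigma(G)\le 29$.

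For the lower bound, let $\mathcal C$ be any cover of $G$; the $336$ elements of order $9$ outside $L$ are the key. Such an element generates a cyclic group $C\cong C_9$ contained in a unique Sylow $3$-subgroup $P$, and the only maximal subgroup of $G$ containing $C$ is $N_G(P)$: the Borels and the Sylow $7$-normalizers have order not divisible by $9$, $L$ contains no outer element, and a conjugate $N_G(P')$ contains the $3$-group $C$ only if $C\le P'$, forcing $P'=P$ by T.I.-ness. Hence for each of the $28$ Sylow $3$-subgroups $P$ the cover $\mathcal C$ must contain a subgroup $H_P\le N_G(P)$ with $C\le H_P$ for some such $C\le P$; and these $H_P$ are pairwise distinct, because if $H_P=H_{P'}$ then $N_G(P')$ would be a maximal subgroup of $G$ containing $C$, hence equal to $N_G(P)$, so $P=P'$. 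Therefore $\mathcal C$ contains at least $28$ subgroups, each of order dividing $54$ and in particular of order prime to $7$; since $G$ has elements of order $7$, $\mathcal C$ must contain at least one further subgroup, so $|\mathcal C|\ge 29$. Combined with the upper bound, $\sigma(G)=29$.

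The main obstacle is the structural input behind the upper bound — that every element of $G\setminus L$, and in particular every element of order $6$, lies in a conjugate of $9{:}6$, equivalently that the $28$ translates of $M\setminus L$ are pairwise disjoint and exhaust the $1008$ elements of $G\setminus L$. This rests on the element-order analysis of the outer cosets and on the uniqueness of the Sylow $3$-subgroup containing a given outer element of $3$-power order; once these are in place the remainder is bookkeeping, and the whole computation can be cross-checked directly in $P\Gamma L(2,8)$.
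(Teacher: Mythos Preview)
Your proof is correct and follows essentially the same line as the paper's: both use the cover consisting of $L=PSL(2,8)$ together with the $28$ normalizers $N_G(P)$ for the upper bound, and both pin down the lower bound via the observation that each element of order $9$ outside $L$ lies in a unique maximal subgroup $N_G(P)$, forcing all $28$ such normalizers into any cover. The one difference worth noting is how the $29$th member is obtained in the lower bound: the paper invokes $\sigma(PSL(2,8))=36$ together with Lemma~\ref{app} to conclude that $L$ itself must appear in every minimal cover (and thereby also records uniqueness of the minimal cover), whereas you sidestep this external input by observing that the $28$ forced subgroups each have order dividing $54$ and hence cannot contain any element of order $7$, so a $29$th subgroup is needed. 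Your route is thus slightly more self-contained; the paper's buys the extra uniqueness statement.
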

In sections \ref{intro} and \ref{solv} we will recall some known results on the structure of the $\sigma$-elementary groups. With the help of these results we will be able to prove that if $n \leq 25$ then an $n$-elementary group has a primitive permutation representation of degree smaller than $n$. Therefore a crucial part of our proof is the study of $\sigma(G)$ for $G$ a primitive permutation group of low degree: all the needed results in this direction will be collected in section \ref{prim}.
\section{Preliminary results} \label{intro}
Let $G$ be a finite group. We recall the definition of the primitive monolithic group $X$ associated to a non-Frattini minimal normal subgroup $N$ of $G$. We consider two cases:
\begin{itemize}
\item $N$ is abelian. Since $N$ is non-Frattini there exists a complement $H$ of $N$ in $G$. Then we define $X:=N \rtimes H/C_H(N)$.
\item $N$ is non-abelian. In this case we define $X:=G/C_G(N)$.
\end{itemize}
In any case $X$ is a primitive monolithic group with socle isomorphic with $N$. \\
Except for the result about the center (which is proved in \cite{cohn}), the following results can be found in \cite{lucchini}:
\begin{teor} \label{sigmaprim}
Let $G$ be a finite, non-abelian and $\sigma$-elementary group.
\begin{enumerate}
\item $\Phi(G)=Z(G)=1$;
\item $G$ has at most one abelian minimal normal subgroup;
\item let $\soc(G)=G_1 \times ... \times G_n$ be the socle of $G$, where $G_1,...,G_n$ are the minimal normal subgroups of $G$. Then $G$ is a subdirect product of the primitive monolithic groups $X_i$ associated to the $G_i$'s. In particular every $X_i$ is an epimorphic image of $G$.
\end{enumerate}
\end{teor}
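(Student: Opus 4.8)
The plan is to exploit a single meta-principle: since $G$ is $\sigma$-elementary, the strict inequality $\sigma(G)<\sigma(G/N)$ holds for every $N\neq 1$, so to establish each assertion it suffices to show that denying it produces a \emph{nontrivial} normal subgroup $N$ with $\sigma(G)=\sigma(G/N)$, an immediate contradiction. Throughout I will use the elementary inequality $\sigma(G)\le\sigma(G/N)$ together with the fact that a minimal cover may always be assumed to consist of maximal subgroups, since enlarging each member of a cover to a maximal subgroup containing it neither increases the cardinality nor destroys the covering property.

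For part (1) I would first prove $\sigma(G)=\sigma(G/\Phi(G))$. Take a minimal cover of $G$ by maximal subgroups; every maximal subgroup contains $\Phi(G)$, so each member is the preimage of a maximal subgroup of $G/\Phi(G)$, and these images cover $G/\Phi(G)$. This yields $\sigma(G/\Phi(G))\le\sigma(G)$, hence equality, forcing $\Phi(G)=1$. For the center I would follow Cohn: with $\Phi(G)=1$, a minimal normal subgroup $A\le Z(G)$ is a central $C_p$ which is complemented, so $G=A\times H$ with $H\cong G/A$; since $G$ is non-abelian, $H$ is non-cyclic. A maximal subgroup either contains $A$ (hence has the form $A\times K$ with $K$ maximal in $H$) or is a complement of $A$, i.e.\ a normal subgroup of index $p$. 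Analysing how these two families meet a coset of the complement $H$ shows that the complements of $A$ can be traded for proper subgroups of $H$ without increasing the count, giving $\sigma(G)=\sigma(H)=\sigma(G/A)$ and hence $Z(G)=1$.

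For part (2) suppose $A_1\neq A_2$ are abelian minimal normal subgroups; by part (1) both are non-Frattini and therefore complemented. A maximal subgroup $M$ not containing $A_1$ satisfies $MA_1=G$, and since $A_1$ is abelian and minimal normal one gets $M\cap A_1=1$, so $M$ is exactly a complement of $A_1$. The idea is then to show that in a minimal cover these complements are \emph{redundant}: the second abelian factor $A_2$ supplies an alternative family of subgroups, all containing $A_1$, covering the same elements, so that a minimal cover may be chosen inside $G/A_1$. This gives $\sigma(G/A_1)\le\sigma(G)$, hence $\sigma(G)=\sigma(G/A_1)$, the desired contradiction. I expect this to be the genuine obstacle: unlike parts (1) and (3), it is not a one-line reduction but requires the delicate quantitative comparison of complements, which is the Lucchini--Detomi covering lemma from \cite{lucchini}.

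For part (3) I would construct for each $i$ a natural epimorphism $\phi_i\colon G\to X_i$. If $G_i$ is non-abelian, $\phi_i$ is the quotient map $G\to G/C_G(G_i)=X_i$. If $G_i$ is abelian, write $G=G_i\rtimes H$ (a complement exists by part (1)) and set $\phi_i\colon G_i\rtimes H\to G_i\rtimes(H/C_H(G_i))=X_i$ by $g\,h\mapsto g\,(hC_H(G_i))$; a direct check shows this is a surjective homomorphism whose kernel is the copy of $C_H(G_i)$, necessarily normal in $G$, so every $X_i$ is an epimorphic image of $G$. It then remains to prove $\bigcap_i\ker\phi_i=1$. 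If this intersection $T$ were non-trivial it would contain some minimal normal $G_j$; if $G_j$ is non-abelian then $G_j\le T\le\ker\phi_j=C_G(G_j)$ forces $G_j$ abelian, while if $G_j$ is abelian then $G_j\le T\le\ker\phi_j\le H$ contradicts $G_j\cap H=1$. Hence $G\to\prod_i X_i$ is injective with surjective projections, exhibiting $G$ as a subdirect product of the $X_i$.
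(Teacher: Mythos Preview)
The paper does not give its own proof of this theorem: it simply records the statement and attributes it to \cite{lucchini} (and, for the assertion $Z(G)=1$, to \cite{cohn}). So there is no in-paper argument to compare against; what you have written is in fact \emph{more} than the paper offers.

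As to correctness: your treatment of $\Phi(G)=1$ and of part~(3) is sound. In~(3) the maps $\phi_i$ are well-defined homomorphisms (for abelian $G_i$, the point that $C_H(G_i)\unlhd G$ follows because $G_i$ centralises $C_H(G_i)$ and $H$ normalises it), and your argument that $\bigcap_i\ker\phi_i=1$ is clean. The center argument in~(1) is only sketched (``analysing how these two families meet a coset of $H$'' is doing real work), but it is indeed Cohn's argument and can be completed along the lines you indicate; since the paper itself merely cites \cite{cohn} here, your sketch is at the same level of detail. For part~(2) you correctly flag that this is the substantive step and defer to the Lucchini--Detomi lemma; your heuristic (``$A_2$ supplies an alternative family covering the same elements'') is a reasonable intuition, though the actual argument in \cite{lucchini} is a more delicate counting comparison between complements of $A_1$ and subgroups over $A_1$, rather than a direct replacement. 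Since the paper also just cites \cite{lucchini} for this, your proposal is adequate as a proof outline.
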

\begin{prop} \label{1e2}
Let $G$ be a non-abelian $\sigma$-elementary group, $N$ a minimal normal subgroup of $G$ and $X$ the primitive monolithic group associated to $N$. Then:
\begin{itemize}
\item If $X=N$ then $G=X=N$.
\item If $|X/N|$ is a prime then $G=X$.
\end{itemize}
\end{prop}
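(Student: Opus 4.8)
The plan is to exploit the subdirect-product structure given by Theorem \ref{sigmaprim}(3) together with the $\sigma$-elementary hypothesis. First consider the case $X = N$. Since $N$ is a minimal normal subgroup of $G$, it is non-Frattini by Theorem \ref{sigmaprim}(1) (as $\Phi(G) = 1$, no minimal normal subgroup can lie in the Frattini subgroup). Now $X$ is by construction an epimorphic image of $G$; write $G/K \cong X = N$ for the corresponding normal subgroup $K$. I would argue that $K = 1$: if $K \neq 1$, then since $G$ is $\sigma$-elementary we have $\sigma(G) < \sigma(G/K) = \sigma(N)$, but on the other hand the natural map $N \hookrightarrow G \twoheadrightarrow G/K$ restricted to the copy of $N$ inside $\operatorname{soc}(G)$ — or more directly, the fact that $X = N$ forces $G$ to have $N$ as a quotient while $N$ embeds in $G$ — leads one to compare $\sigma(G)$ and $\sigma(N)$. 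The cleanest route: when $X = N$ is non-abelian simple or a product of isomorphic simples, the associated $X_i = N$ means $G$ is a subdirect product of copies all equal to $N$; minimality and $\sigma$-elementariness then collapse $G$ onto a single factor $N$. I expect the key step here is to rule out $G$ being a proper subdirect product by showing such a $G$ would satisfy $\sigma(G) = \sigma(G/N') $ for some nontrivial normal $N'$, contradicting $\sigma$-elementariness.

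For the second bullet, assume $|X/N|$ is prime, say $|X/N| = p$. By Theorem \ref{sigmaprim}(3), $X$ is an epimorphic image of $G$, so write $G/K \cong X$. The goal is $K = 1$, i.e. $G \cong X$. Suppose $K \neq 1$. Since $G$ is $\sigma$-elementary, $\sigma(G) < \sigma(G/K) = \sigma(X)$. I would now produce a cover of $G$ of size $\sigma(X)$, contradicting this strict inequality. The natural candidate cover comes from lifting a minimal cover of $X$: the preimages in $G$ of the proper subgroups of $X$ in a minimal cover of $X$ form a cover of $G$ by $\sigma(X)$ proper subgroups — but these preimages contain $K$, so they are proper only if $K$ itself is proper, which it is. Hence $\sigma(G) \le \sigma(X)$, consistent but not yet contradictory. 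The sharpening: because $|X/N| = p$ is prime, $N$ is a maximal normal subgroup of $X$, and one can analyze whether $\sigma(X) = \sigma(X/N) = \sigma(C_p) = \infty$ is possible (it is not, since $X$ is non-cyclic monolithic with $N$ non-Frattini), or whether the minimal cover of $X$ can be chosen to consist of subgroups each containing $N$ — which would make it pull back to a cover of $X/N = C_p$, impossible since $C_p$ has no proper cover. Therefore a minimal cover of $X$ must contain a subgroup $H$ with $HN = X$, i.e. $H$ is a complement-like supplement to $N$; combined with $K \cap N$ being a proper $G$-invariant subgroup of the minimal normal subgroup $N$ (hence trivial or all of $N$), one pins down $K$.

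The main obstacle I anticipate is the bookkeeping around how $K$ interacts with the socle. Concretely, $K \trianglelefteq G$ and $K \cap N$ is a normal subgroup of $G$ contained in the minimal normal subgroup $N$, so $K \cap N \in \{1, N\}$. If $K \cap N = N$ then $N \le K$, so $N$ maps trivially into $G/K \cong X$, contradicting that $\operatorname{soc}(X) \cong N \neq 1$; hence $K \cap N = 1$, which means $K$ is centralized by $N$ or at least $K$ embeds into $G/N$. Then $KN/N$ is a nontrivial normal subgroup of $G/N \cong X/N$ of order dividing... but $|X/N| = p$ is prime, so $KN/N = X/N$, i.e. $KN = G$ and $|K|$ is divisible by $p$ with $K \cap N = 1$, forcing $G = N \rtimes K$ with $|K| $ having the same $p'$-part issue. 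Chasing this, one gets $G/K \cong X$ has socle $N$ but $G = NK$ with $N \cap K = 1$ gives $G/K \cong N$, contradicting $|X/N| = p > 1$. Thus $K = 1$ and $G = X$. The delicate point throughout is ensuring the center and Frattini vanishing from Theorem \ref{sigmaprim}(1) are invoked exactly where needed to guarantee $N$ is non-Frattini and complemented (in the abelian case), and that the subdirect decomposition leaves no room for a nontrivial kernel; I would keep careful track of the abelian-versus-nonabelian dichotomy in the definition of $X$, since the complement $H$ appearing there is the crux of the affine case.
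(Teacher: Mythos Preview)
The paper does not actually prove Proposition~\ref{1e2}; it is quoted from \cite{lucchini} (see the sentence preceding Theorem~\ref{sigmaprim}). So there is no in-paper proof to compare against, and your proposal must stand on its own. Unfortunately it does not.

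For the first bullet your sketch never crystallizes into an argument. The decisive observation you are circling around is this: if $N$ is non-abelian then $X=G/C_G(N)$, so $X=N$ forces $|G/C_G(N)|=|N|$; since $N$ embeds in $G/C_G(N)$ (as $N\cap C_G(N)=Z(N)=1$), we get $G=N\times C_G(N)$. Now invoke the direct-product formula $\sigma(A\times B)=\min(\sigma(A),\sigma(B))$: if $C_G(N)\neq 1$ then $\sigma(G)$ equals $\sigma(G/C_G(N))$ or $\sigma(G/N)$, contradicting $\sigma$-elementariness. Your text gestures at a subdirect collapse but never isolates this step, and the abelian case (where $X=N$ would force $N\le Z(G)=1$) is not addressed at all.

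For the second bullet there is a genuine error. You assert that ``$KN/N$ is a nontrivial normal subgroup of $G/N\cong X/N$''. But $G/N$ is \emph{not} isomorphic to $X/N$ unless $K=1$, which is exactly what you are trying to prove: from $G/K\cong X$ and $K\cap N=1$ one gets $|G|=|K|\,|N|\,p$, hence $|G/N|=|K|\,p$, while $|X/N|=p$. In particular $[G:KN]=p$, so $KN\neq G$, and your chain ``$KN/N=X/N$, i.e.\ $KN=G$ \ldots\ gives $G/K\cong N$'' collapses. The index-$p$ subgroup $KN$ is real, but you have not extracted a contradiction from it; the contradiction in \cite{lucchini} requires a more careful comparison of covers (or of the subdirect embedding of $G$ into $\prod X_i$) than the order-counting you attempt here. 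As written, the argument for the second bullet is circular.
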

\begin{prop} \label{cohom}
Let $H$ be a group, and let $V$ be a $H$-module. Define $G:=V \rtimes H$ and suppose that $C_V(H)=0$ (this is the case if $V$ is a non-central minimal normal subgroup of $G$). Then:
\begin{enumerate}
\item If $H^1(H,V) \neq 0$ then $\sigma(G)=\sigma(H)$;
\item if $\sigma(H) \geq 2|V|$ then $H^1(H,V)=0$.
\end{enumerate}
\end{prop}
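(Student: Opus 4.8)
The plan is to translate the problem into the language of complements and group cohomology. The complements of $V$ in $G=V\rtimes H$ are exactly the subgroups $H_\delta=\{(\delta(x),x):x\in H\}$ with $\delta$ ranging over the cocycles $Z^1(H,V)$, and $H_\delta$ is $V$-conjugate to $H_{\delta'}$ if and only if $\delta-\delta'\in B^1(H,V)$; hence the $V$-conjugacy classes of complements are parametrised by $H^1(H,V)$. The hypothesis $C_V(H)=0$ makes $V$ act freely by conjugation on each class (the stabiliser of $H_\delta$ in $V$ is $C_V(H)$), so every class consists of exactly $|V|$ complements and there are $|H^1(H,V)|\cdot|V|$ complements in total. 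I would also record at the outset the elementary inequality $\sigma(G)\le\sigma(G/V)=\sigma(H)$, obtained by lifting a minimal cover of $H$ to the subgroups of $G$ containing $V$; the whole content of the proposition is to decide when this is strict. I would treat the elementary case where $V$ is $H$-irreducible first (so that every maximal subgroup of $G$ either contains $V$ or is a complement) and reduce the general case to it by induction on $|V|$.

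I would prove (2) first, in the contrapositive form: if $H^1(H,V)\ne0$ then $\sigma(H)<2|V|$. Fix a cocycle $d\in Z^1(H,V)$ that is not a coboundary and, for each $v\in V$, put $H_v=\{x\in H:d(x)=(1-x)v\}$. The cocycle identity shows each $H_v$ is a subgroup, and each is proper precisely because $d\notin B^1(H,V)$ (the equality $H_v=H$ would read $d=(1-\,\cdot\,)v$). These at most $|V|$ proper subgroups cover every $x$ with $d(x)\in\Imm(x-1)$; taking $V$ to be an elementary abelian $p$-group (the relevant case), this includes every $p$-regular $x$, since $H^1(\langle x\rangle,V)=0$ when $p\nmid|x|$ forces $d|_{\langle x\rangle}$ to be a coboundary and hence $d(x)\in\Imm(x-1)$. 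It then remains to cover the $p$-singular elements of $H$ with fewer than $|V|$ further proper subgroups, which gives $\sigma(H)<2|V|$.

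For (1) I would take a minimal cover $\mathcal C$ of $G$ by maximal subgroups and split it into the family $\mathcal A$ of members containing $V$ and the family $\mathcal B$ of complements. Projecting $\mathcal A$ to $H=G/V$ yields proper subgroups covering a set $U\subseteq H$. If $U=H$ then $|\mathcal A|\ge\sigma(H)$ and we are done; otherwise some coset $Vh_0$ is covered entirely by $\mathcal B$, and since a complement meets each coset in exactly one point this forces $|\mathcal B|\ge|V|$. More precisely, the values $\{\delta(h_0):H_\delta\in\mathcal B\}$ must fill $V$, and since each $V$-class contributes only one coset of $\Imm(h_0-1)$ at $h_0$, the complements used must meet at least $|V/\Imm(h_0-1)|=|C_V(h_0)|$ distinct classes. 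Invoking (2), which guarantees $\sigma(H)<2|V|$ in this regime, together with this class-and-coset accounting, I would show $|\mathcal A|+|\mathcal B|\ge\sigma(H)$, whence $\sigma(G)\ge\sigma(H)$ and therefore $\sigma(G)=\sigma(H)$.

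The main obstacle, in both parts, is the cocycle bookkeeping rather than the easy inequality. In (1) the delicate point is that a single $V$-class of complements already covers all cosets $Vx$ with $C_V(x)=0$, so one cannot simply bound $|\mathcal B|$ crudely; one must show that, once the $p$-singular, fixed-point-bearing elements of $H$ are accounted for, what remains cannot be covered more cheaply than by $\sigma(H)$ subgroups. In (2) the corresponding difficulty is the explicit covering of the $p$-singular elements by strictly fewer than $|V|$ proper subgroups after the subgroups $H_v$ have absorbed all $p$-regular elements; this is where I expect the genuine work to lie.
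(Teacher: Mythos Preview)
The paper does not prove this proposition; it is quoted, together with the two results preceding it, from Detomi and Lucchini \cite{lucchini}. So there is no argument in the paper to compare yours against, but your sketch has genuine gaps on its own terms.

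For part (2) your subgroups $H_v=\{h\in H:d(h)=(1-h)v\}$ are exactly right, and what you call ``where the genuine work lies'' is in fact a one-line completion you have overlooked. Any $h$ not lying in some $H_v$ satisfies $d(h)\notin(1-h)V$, hence $(1-h)V\neq V$, hence $C_V(h)\neq 0$; therefore $h\in C_H(v)$ for some nonzero $v$. Since $C_V(H)=0$, each $C_H(v)$ with $v\neq 0$ is a proper subgroup, and there are $|V|-1$ of them. Thus the $|V|$ subgroups $H_v$ together with the $|V|-1$ centralisers $C_H(v)$ form a cover of $H$ by $2|V|-1$ proper subgroups, giving $\sigma(H)\le 2|V|-1$. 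Your detour through $p$-regular elements, and the tacit extra hypothesis that $V$ is an elementary abelian $p$-group, are unnecessary.

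For part (1) your counting does not close: from $|\mathcal B|\ge|V|$ and $\sigma(H)<2|V|$ you cannot conclude $|\mathcal A|+|\mathcal B|\ge\sigma(H)$, and nowhere do you produce a cover of $H$ from the cover of $G$. The idea you are missing is this. By (2) one has $\sigma(G)\le\sigma(H)\le 2|V|-1$, while the number of complements of $V$ is $|Z^1(H,V)|=|H^1(H,V)|\cdot|V|\ge 2|V|$; hence some complement $H_{d'}$ is absent from any minimal cover $\mathcal M$ of $G$. For each $M\in\mathcal M$ set $K_M:=\{h\in H:(d'(h),h)\in M\}$, which is the image in $H\cong H_{d'}$ of $M\cap H_{d'}$. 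In the irreducible case each $K_M$ is proper (since $H_{d'}$ is maximal and $H_{d'}\notin\mathcal M$), and for every $h\in H$ the element $(d'(h),h)$ lies in some $M\in\mathcal M$, so $h\in K_M$. Thus $\{K_M:M\in\mathcal M\}$ covers $H$ with at most $\sigma(G)$ proper subgroups, yielding $\sigma(H)\le\sigma(G)$ and hence equality.
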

\section{About solvable $\sigma$-elementary groups} \label{solv}
In this paragraph we will give and prove a known result about solvable $\sigma$-elementary groups, which is a consequence of the next result of Tomkinson (cf. \cite{tomkinson}):
\begin{teor}[Tomkinson] \label{tom}
Let $G$ be a solvable non-cyclic group. Then $\sigma(G)=|S/K|+1$ where $|S/K|$ is the least order of a chief factor of $G$ with more than a complement.
\end{teor}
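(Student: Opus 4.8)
The plan is to prove $\sigma(G)=q+1$, where $q$ denotes the least order of a chief factor of $G$ that is complemented with more than one complement ($q=\infty$ if there is none), by induction on $|G|$, treating $\sigma(G)\le q+1$ and $\sigma(G)\ge q+1$ separately. Two standing reductions: by Jordan--Hölder the multiset of chief factors of $G$, each tagged by its isomorphism type and its number of complements, is an invariant, so ``relevant chief factor'' (complemented, more than one complement) is well defined, and for a minimal normal subgroup $N$ the chief factors of $G$ are those of $G/N$ together with $N$, so $q(G/N)\ge q$; and since $\Phi(G)$ lies in every maximal subgroup, $\sigma(G)=\sigma(G/\Phi(G))$ and $q=q(G/\Phi(G))$, so we may assume $\Phi(G)=1$, whence all minimal normal subgroups are complemented.

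\emph{Lower bound.} Take a minimal cover, which we may take to consist of $s=\sigma(G)$ distinct maximal subgroups, and fix a minimal normal subgroup $N$ (abelian, as $G$ is solvable). Each maximal subgroup either contains $N$ or is a complement of $N$ (the latter because then $MN=G$, and $M\cap N$, normalized by $M$ and by the abelian group $N$, is normal in $G$, hence trivial); let $\mathcal A,\mathcal B$ be the corresponding sub-families, so $s=|\mathcal A|+|\mathcal B|$. If the images of $\mathcal A$ cover $G/N$, then $s\ge|\mathcal A|\ge\sigma(G/N)\ge q(G/N)+1\ge q+1$ by induction. Otherwise some coset $gN$ meets no member of $\mathcal A$ (a subgroup containing $N$ meets $gN$ only if it contains it), so $gN\subseteq\bigcup\mathcal B$; since a complement of $N$ meets $gN$ in one point, $|\mathcal B|\ge|N|\ge2$, so $N$ has more than one complement and $|N|\ge q$; moreover $\bigcup\mathcal B$ misses $N\smallsetminus\{1\}$, so $|\mathcal A|\ge1$ and $s\ge 1+|N|\ge q+1$.

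\emph{Upper bound.} Choose a relevant chief factor $S/K$ with $|S/K|=q$. Then $\sigma(G)\le\sigma(G/K)=\sigma\bigl((G/K)/\Phi(G/K)\bigr)$, so after replacing $G$ by this last group we may assume $G$ has a relevant minimal normal subgroup $V$, elementary abelian of order $q=p^{n}$, with $\Phi(G)=1$; fix a complement $H$, so $G=V\rtimes H$. If $V\le Z(G)$ then $V\cong C_p$ (a central minimal normal subgroup is simple), $G=C_p\times H$, and $\Hom(H,C_p)\ne0$ because $V$ has more than one complement; composing the projection $G\to V$ with an epimorphism $H\twoheadrightarrow C_p$ gives $W\trianglelefteq G$ with $G/W\cong C_p\times C_p$, so $\sigma(G)\le\sigma(G/W)=p+1=q+1$. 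If $V\not\le Z(G)$ then $C_V(H)=0$ ($V$ is an irreducible, non-central module), and replacing $G$ by $G/C_H(V)$ — which keeps $V$ a relevant minimal normal subgroup and leaves $q$ unchanged — we may assume $H\le GL(V)$ acts faithfully and irreducibly. The crucial claim is that then $H$ acts fixed-point-freely, i.e.\ $G$ is a Frobenius group; granting it, the $q$ distinct conjugates $H^{v}$ ($v\in V$), together with $V$, cover $G$, since an element $g=vh$ with $h\ne1$ lies in $H^{v_0}$ where $(h-1)v_0=v$ — solvable because $h-1$ is invertible — so $\sigma(G)\le q+1$.

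The main obstacle is this last claim: if a solvable group $H$ acts faithfully and irreducibly on $V=\mathbb F_p^{\,n}$ and some nontrivial element fixes a nonzero vector, then $H$ has a relevant chief factor of order $<|V|$ — which, being a chief factor of $V\rtimes H$ as well, contradicts the minimality of $q$. (This simultaneously excludes $H^1(H,V)\ne0$, since a faithful fixed-point-free action has degree coprime to $p$; were the action fixed-point-free but $H^1(H,V)\ne0$ one would instead invoke Proposition~\ref{cohom}.) I expect to extract the small relevant factor from the action of the Fitting subgroup $F(H)$ on $V$: by Clifford theory $V|_{F(H)}$ is a sum of conjugate irreducibles, and a nontrivial element with a fixed point should force a section of $F(H)$ (or of a normalizer inside it) to carry a non-trivial, non-central — hence $>1$-complemented — abelian chief factor of order strictly less than $|V|$. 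Making this uniform, in particular when the fixed-point element is a $p$-element and coprime decompositions are unavailable, is the delicate step; an alternative is to construct directly a cover of $V\rtimes H$ of size $q+1$ from subgroups of the form $V\rtimes H_1$ and their conjugates when the action is not Frobenius.
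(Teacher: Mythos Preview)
The paper does not prove Theorem~\ref{tom} at all: it is quoted as a known result of Tomkinson with a reference to \cite{tomkinson}, and is then \emph{used} to derive Theorem~\ref{risol}. So there is no ``paper's own proof'' to compare your attempt against.

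That said, your attempt is worth commenting on. The lower-bound half is essentially correct and is in fact close to Tomkinson's original argument. The upper-bound half is where you yourself flag a gap, and the gap is real: the cover $V\cup\bigcup_{v\in V}H^{v}$ genuinely fails when the action of $H$ on $V$ is not fixed-point-free (for instance in $S_4$ with $V=V_4$ and $H=S_3$, the six $4$-cycles lie in no conjugate of $H$ and not in $V$). So you do need the Frobenius claim, or an alternative.

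Your proposed route to the Frobenius claim via Clifford theory on the Fitting subgroup is more elaborate than necessary. A much shorter completion, which dovetails with what the paper does immediately after stating Theorem~\ref{tom}, is this. Suppose $H$ acts faithfully and irreducibly on $V$ but not fixed-point-freely. First observe that $H$ cannot be cyclic: if $H=\langle h\rangle$ and some power $h^{k}\neq 1$ fixes a nonzero vector, then the fixed space of $h^{k}$ is an $H$-submodule (as $H$ is abelian), hence all of $V$, contradicting faithfulness. So $H$ is solvable and non-cyclic; a short direct argument (reduce to $\Phi(H)=1$; either some minimal normal subgroup of $H$ is non-central, hence has $\lvert N\rvert>1$ conjugate complements, or $H$ is abelian with a $C_p\times C_p$ quotient) shows that $H$ has a chief factor with more than one complement. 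Now invoke Gasch\"utz's theorem (exactly the result cited as \cite{gaschutz} in the paragraph following Theorem~\ref{tom}): every chief factor of a solvable group acting faithfully and irreducibly on $V$ has order strictly less than $|V|$. Thus $H$ --- and hence $G$ --- has a relevant chief factor of order $<|V|=q$, contradicting the minimality of $q$. This forces the action to be fixed-point-free, and your Frobenius cover then gives $\sigma(G)\le q+1$.

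Two minor remarks. Your ``standing reduction'' that the number of complements of a chief factor is a Jordan--H\"older invariant is stronger than what you need and not obviously true; all you use is that chief factors of $G/N$ are chief factors of $G$, so $q(G/N)\ge q(G)$, which is immediate. And in the upper bound you need not pass to $G/\Phi(G)$ after already passing to $G/K$: the chosen $V=S/K$ is complemented by hypothesis, so you can take a complement directly.
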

Let $G$ be a non-abelian solvable $\sigma$-elementary group. By theorem \ref{sigmaprim} we know that $G$ is monolithic (let $V$ be its socle), and by theorem \ref{tom} we have $\sigma(G)=|S/K|+1$ for a chief factor $S/K$ of $G$ with multiple complements, and whose order is minimal among the orders of the chief factors with this property. Suppose $K \neq 1$, i.e. $V \leq K$. If $G/V$ is not cyclic then $$|S/K|+1=\sigma(G)<\sigma(G/V)=1+|S_0/K_0|,$$ where $S_0/K_0$ is a smallest chief factor of $G/V$ with more than a complement. The inequality $|S/K|<|S_0/K_0|$ gives then a contradiction, and $K=1$. This means that $\sigma(G)=|V|+1$. Since $G/V$ acts faithfully and irreducibly on $V$, by the result of Gasch\"utz (see \cite{gaschutz}) the chief factors of $G/V$ have order $<|V|$, and this implies $|V|+1=\sigma(G) \leq \sigma(G/V)<|V|+1$, contradiction. We deduce that $G/V$ is cyclic, and $\sigma(G)=|V|+1$ by lemma 2.1 in \cite{tomkinson}. \\
Summarizing:
\begin{teor} \label{risol}
Let $G$ be a finite solvable non-abelian group. Then $G$ is $\sigma$-elementary if and only if it is monolithic and $G/\soc(G)$ is cyclic. In this case $\sigma(G)=|\soc(G)|+1$.
\end{teor}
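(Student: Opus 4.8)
The plan is to establish the biconditional by treating its two halves separately, using Tomkinson's formula (Theorem~\ref{tom}) for the value $\sigma(G)$ and the structure theorem (Theorem~\ref{sigmaprim}) for the shape of a $\sigma$-elementary group; write $V=\soc(G)$ throughout. For the ``only if'' direction, assume $G$ is $\sigma$-elementary. Since $G$ is solvable every minimal normal subgroup is abelian, so Theorem~\ref{sigmaprim}(2) forces $G$ to have exactly one minimal normal subgroup: $G$ is monolithic with socle $V$. By Theorem~\ref{tom}, $\sigma(G)=|S/K|+1$ for a chief factor $S/K$ carrying more than one complement, of least order among all such; I would first show $K=1$. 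If $K\neq 1$ then $V\le K<S\le G$ by monolithicity, and because $V\le K$ the chief factor $S/K$ --- together with all its complements in $G/K$ --- descends to a chief factor of $G/V$ with more than one complement. In particular $G/V$ is non-cyclic (a cyclic group has no chief factor with two complements), so Theorem~\ref{tom} applies to $G/V$ and gives $\sigma(G/V)\le|S/K|+1=\sigma(G)$, contradicting $\sigma(G)<\sigma(G/V)$. Hence $K=1$, so $S$ is a minimal normal subgroup, whence $S=V$ and $\sigma(G)=|V|+1$.

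Still assuming $G$ is $\sigma$-elementary, it remains to see that $G/V$ is cyclic; suppose not, so $\sigma(G)<\sigma(G/V)$. By Theorem~\ref{sigmaprim}(3), $G$ embeds as a subdirect product of the monolithic groups attached to its minimal normal subgroups, of which there is now only one, so $G$ coincides with the primitive monolithic group $X$ associated to $V$. Consequently $C_G(V)=V$ (the abelian socle of a primitive group is self-centralizing) and $G/V$ acts faithfully and irreducibly on $V$, so by the theorem of Gasch\"utz (\cite{gaschutz}) every chief factor of $G/V$ has order strictly less than $|V|$; Theorem~\ref{tom} then yields $\sigma(G/V)<|V|+1=\sigma(G)$, a contradiction. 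Therefore $G/V$ is cyclic, and this half of the statement, including the value $\sigma(G)=|\soc(G)|+1$, is proved.

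For the ``if'' direction, let $G$ be solvable, non-abelian, monolithic with socle $V$, and with $G/V$ cyclic. Being non-abelian, $G$ is non-cyclic, hence $\sigma(G)$ is a finite integer. If $1\neq N\unlhd G$ then $N\supseteq V$ by monolithicity, so $G/N$ is a quotient of the cyclic group $G/V$ and is therefore cyclic, giving $\sigma(G/N)=\infty>\sigma(G)$; thus $G$ is $\sigma$-elementary. The value $\sigma(G)=|\soc(G)|+1$ now follows from the ``only if'' analysis applied to $G$ itself (the $K=1$ argument of the first paragraph uses nothing beyond $G$ being $\sigma$-elementary and monolithic); alternatively one may quote Lemma~2.1 of \cite{tomkinson}.

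I expect the ``only if'' direction to be the crux, and within it the step that $G/V$ is cyclic: this is where one must know that $G$ is forced to equal the full primitive monolithic group $X$, so that $G/V$ acts faithfully on $V$, and then invoke Gasch\"utz's bound on the orders of the chief factors of a group acting faithfully and irreducibly. The descent argument for $K=1$ is the other place demanding care, the delicate point being to check that a chief factor lying above $V$, and each of its complements, really passes down to $G/V$.
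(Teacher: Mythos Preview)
Your proof is correct and follows essentially the same route as the paper's: monolithicity from Theorem~\ref{sigmaprim}, Tomkinson's formula to identify the relevant chief factor, the descent argument to force $K=1$, and Gasch\"utz's bound (using that $G$ coincides with its associated primitive monolithic group, so $G/V$ acts faithfully on $V$) to force $G/V$ cyclic. Your presentation is in fact slightly cleaner than the paper's --- you separate the $K=1$ step from the ``$G/V$ cyclic'' step rather than nesting them, and you spell out the easy ``if'' direction, which the paper leaves implicit.
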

\section{About the sum of some primitive groups} \label{prim}
We study in this section the sum of primitive groups of degree up to $24$. This will simplify the study of $n$-elementary groups. We begin with a known lemma.
\begin{lemma} \label{app}
If $H$ is a maximal subgroup of a group $G$ and $\sigma(H)>\sigma(G)$ then $H$ appears in every minimal cover of $G$. In particular if $H$ is maximal and non-normal then $\sigma(H) < [G:H]$ implies $\sigma(G) \geq \sigma(H)$.
\end{lemma}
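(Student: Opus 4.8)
The plan is to prove the two assertions separately, with the first being the engine for the second. For the first assertion, suppose $\mathcal{C}$ is a minimal cover of $G$, so $|\mathcal{C}|=\sigma(G)$, and assume for contradiction that $H \notin \mathcal{C}$. Since $H$ is maximal, every subgroup in $\mathcal{C}$ that is not contained in $H$ meets $H$ in a proper subgroup of $H$; in fact, intersecting $\mathcal{C}$ with $H$ gives a family $\{M \cap H : M \in \mathcal{C}\}$ of subgroups of $H$ whose union is $H$ (because $\mathcal{C}$ covers $G \supseteq H$). If all these intersections were equal to $H$ we would have $H \subseteq M$ for some $M \in \mathcal{C}$, and maximality of $H$ together with $M$ proper would force $M = H$, contradicting $H \notin \mathcal{C}$. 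So the intersections that are proper in $H$ already cover $H$ — possibly after discarding the members of $\mathcal{C}$ contained in $H$, of which there are none other than the ones we just ruled out — hence $H$ is covered by at most $|\mathcal{C}| = \sigma(G)$ proper subgroups of itself, giving $\sigma(H) \le \sigma(G)$, against the hypothesis $\sigma(H) > \sigma(G)$. Therefore $H \in \mathcal{C}$.

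For the second assertion, assume $H$ is maximal and non-normal with $\sigma(H) < [G:H]$, and suppose toward a contradiction that $\sigma(G) < \sigma(H)$. By the first part, $H$ lies in every minimal cover $\mathcal{C}$ of $G$. Because $H$ is non-normal, its conjugates $H^g$ are pairwise distinct maximal subgroups, and any cover of $G$ containing $H$ must also account for the cosets of $G$ not covered by $H$ — in particular the elements lying in no conjugate of $H$. The point is to count: the members of $\mathcal{C}$ other than $H$ must cover $G \setminus H$, and one then compares the number $\sigma(G) - 1$ of such members against what is needed. Here one uses that a minimal cover can be taken to consist of maximal subgroups, and that replacing $H$ by the full conjugacy class $\{H^g\}$ (of size $[G:N_G(H)] \ge [G:H]$, or handled directly via the index) would be forced if one tried to cover $G$ economically; since $[G:H] > \sigma(H) > \sigma(G)$, there is simply not enough room, yielding the contradiction $\sigma(G) \ge \sigma(H)$.

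The main obstacle is the bookkeeping in the second part: one must argue cleanly that the elements of $G$ outside $\bigcup_g H^g$ cannot be swept up cheaply, or alternatively run the argument through $\sigma(H)$ directly. The cleanest route I foresee is to take a minimal cover $\mathcal{C}$ of $G$ with $H \in \mathcal{C}$, restrict to $H$ as in part one to get a cover of $H$ by $|\mathcal{C} \setminus \{H\}| = \sigma(G) - 1$ proper subgroups of $H$ — using that $H$ itself contributes nothing new to covering $H$ and that each $M \cap H$ for $M \neq H$ is proper in $H$ by maximality — and conclude $\sigma(H) \le \sigma(G) - 1 < \sigma(G)$, which is absurd once we know $\sigma(G) < \sigma(H)$; so in fact $\sigma(G) \ge \sigma(H)$. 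The hypothesis $\sigma(H) < [G:H]$ enters precisely to guarantee that such a restricted family genuinely covers $H$ with proper subgroups rather than degenerating — i.e., to ensure $H$ is not itself the only subgroup available, which is where the index bound does its work. I would double-check this degeneration point carefully, as it is the subtle step.
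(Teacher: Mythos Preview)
Your proof of the first assertion is correct and is exactly the paper's argument: intersect a minimal cover with $H$ and use maximality to force one of the intersections to be all of $H$.

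For the second assertion, your first paragraph brushes against the right idea (the whole conjugacy class of $H$ is forced into the cover), but the argument is not carried out, and the inequality $[G:N_G(H)] \ge [G:H]$ is backwards; in fact $H \le N_G(H)$ gives $[G:N_G(H)] \le [G:H]$, with equality here because $H$ maximal and non-normal forces $N_G(H)=H$.

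The real problem is your proposed ``cleanest route'': it does not work. If $\mathcal{C}$ is a minimal cover and $H \in \mathcal{C}$, the family $\{M \cap H : M \in \mathcal{C},\ M \neq H\}$ need \emph{not} cover $H$. Any element of $H$ that lies in no other member of $\mathcal{C}$ is lost when you delete $H$; your claim that ``$H$ itself contributes nothing new to covering $H$'' is the opposite of the truth. So you cannot conclude $\sigma(H) \le \sigma(G)-1$ this way, and the hypothesis $\sigma(H) < [G:H]$ plays no role in repairing this.

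The paper's argument for the second part is the one you almost stated: assume $\sigma(G) < \sigma(H)$. Every conjugate $H^g$ is maximal with $\sigma(H^g)=\sigma(H) > \sigma(G)$, so by the first part each $H^g$ lies in every minimal cover. Since $N_G(H)=H$, there are $[G:H]$ distinct conjugates, hence $\sigma(G) \ge [G:H] > \sigma(H)$, contradicting $\sigma(G) < \sigma(H)$. That is where the index hypothesis actually enters.
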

\begin{proof}
Let $\{H_1,...,H_n\}$ be a minimal cover of $G$, where $n=\sigma(G)$. Then $H=(H \cap H_1) \cup ... \cup (H \cap H_n)$ is an union of less than $\sigma(H)$ subgroups equal to $H$, so at least one of them must be unproper (by definition of $\sigma(H)$): $H \cap H_i=H$ for some $i \in \{1,...,n\}$, i.e. $H=H_i$ since $H$ is maximal. Every conjugate of $H$ is a maximal subgroup of $G$ isomorphic to $H$, thus if $H$ is not normal then in every minimal cover of $G$ there are all the $[G:H]$ conjugates of $H$.
\end{proof}
\subsection{Primitive groups with non-abelian socle}
In this paragraph we will prove the bounds on the sums of the primitive groups of degree up to $24$ and non-abelian socle which are stated in the following tabular.
$$\begin{tabular}{|c|c|}
\hline $\deg$ & $\sigma$ \\ \hline $5$ & $\sigma(\Alt(5))=10;\sigma(\Sym(5))=16.$ \\ \hline $6$ & $\sigma(\Alt(5))=10;\sigma(\Sym(6))=13;\sigma(\Sym(5))=16.$ \\ \hline $7$ & $\sigma(SL(3,2))=15;\sigma(\Alt(7))=31;\sigma(\Sym(7))=64.$ \\ \hline $8$ & $\sigma(PSL(2,7))=\sigma(PGL(2,7))=29;$ \\ & $\sigma(\Alt(8)) \geq 64;\sigma(\Sym(8)) \geq 29.$ \\ \hline $9$ & $\sigma(\Aut(PSL(2,8)))=29;\sigma(PSL(2,8))=36;$ \\ & $\sigma(\Alt(9)) \geq 80;\sigma(\Sym(9)) \geq 172.$ \\ \hline $10$ & $\sigma(\Alt(5))=10;\sigma(\Sym(6))=13;\sigma(\Alt(6))=16;$ \\ & $\sigma(PGL(2,9))=46;\sigma(M_{10}),\sigma(\Alt(10)),\sigma(\Sym(10)) \geq 45,\sigma(P \Gamma L(2,9))=3.$ \\ \hline $11$ & $\sigma(M_{11})=23;\sigma(PSL(2,11))=67;\sigma(\Alt(11)),\sigma(\Sym(11)) \geq 512.$ \\ \hline $12$ & $\sigma(M_{11})=23;\sigma(PSL(2,11))=\sigma(PGL(2,11))=67;$ \\ & $\sigma(M_{12}),\sigma(\Alt(12)),\sigma(\Sym(12)) \geq 67.$ \\ \hline $13$ & $\sigma(PSL(3,3)),\sigma(\Alt(13)),\sigma(\Sym(13)) \geq 144.$ \\ \hline $14$ & $\sigma(PSL(2,13)),\sigma(PGL(2,13)),\sigma(\Alt(14)),\sigma(\Sym(14)) \geq 92.$ \\ \hline $15$ & $\sigma(\Sym(6))=13;\sigma(\Alt(6))=16;\sigma(\Alt(7))=31;$ \\ & $\sigma(PSL(4,2)),\sigma(\Alt(15)),\sigma(\Sym(15)) \geq 64.$ \\ \hline $16$ & $\sigma(\Alt(16)) \geq 2^{14}; \sigma(\Sym(16))>6435.$ \\ \hline $17$ & $\sigma(PSL(2,16)),\sigma(PSL(2,16):2,$ \\ & $\sigma(P \Gamma L(2,16)),\sigma(\Alt(17)),\sigma(\Sym(17)) \geq 68.$ \\ \hline $18$ & $\sigma(PSL(2,17)),\sigma(PGL(2,17)),\sigma(\Alt(18)),\sigma(\Sym(18)) \geq 2^{16}.$ \\ \hline $19$ & $\sigma(\Alt(19)),\sigma(\Sym(19)) \geq 2^{17}$ \\ \hline $20$ & $\sigma(PSL(2,19)),\sigma(PGL(2,19)),\sigma(\Alt(20)),\sigma(\Sym(20)) \geq 191.$ \\ \hline $21$ & $\sigma(SL(3,2))=15;\sigma(\Alt(7))=31;\sigma(P \Gamma L(3,4))=3;$ \\ & $\sigma(PGL(2,7)),\sigma(\Sym(7)),\sigma(PSL(3,4)),\sigma(P \Sigma L(3,4)),$ \\ & $\sigma(PGL(3,4)),\sigma(\Alt(21)),\sigma(\Sym(21)) \geq 64.$ \\ \hline
\end{tabular}$$
$$\begin{tabular}{|c|c|}
\hline $\deg$ & $\sigma$ \\ \hline $22$ & $\sigma(M_{22}),\sigma(M_{22}:2),\sigma(\Alt(22)),\sigma(\Sym(22)) \geq 67.$ \\ \hline $23$ & $\sigma(M_{23}),\sigma(\Alt(23)),\sigma(\Sym(23)) \geq 64.$ \\ \hline $24$ & $\sigma(M_{24}),\sigma(PSL(2,23)),\sigma(PGL(2,23)),$ \\ & $\sigma(\Alt(24)),\sigma(\Sym(24)) \geq 277.$ \\ \hline
\end{tabular}$$
The bounds about $\sigma(\Sym(n))$ when $n$ is odd or $n \geq 14$, $\sigma(\Alt(n))$ when $n \neq 7$ and $\sigma(M_{11})$ are proved in the work of Maroti \cite{maroti}. $\sigma(\Sym(5))$ and $\sigma(\Alt(5))$ are computed by Cohn in \cite{cohn}, $\sigma(\Sym(6))$ is found by Abdollahi, Ashraf and Shaker in \cite{shaker}, the bound on $\sigma(M_{10})$ is proved by Lucchini and Detomi in \cite{lucchini}, the sums of the groups of the form $PGL(2,q)$ and $PSL(2,q)$ are found by Bryce, Fedri and Serena in \cite{BFS}, $\sigma(\Alt(7))$ is found by Kappe in \cite{kappe}. The only groups that we are left to study to prove what is stated in the tabular are the following:
\begin{itemize}
\item $\Sym(8)$. It admits $PGL(2,7)$ as a maximal subgroup of index $120$ and sum $29$, so $\sigma(\Sym(8)) \geq 29$ by lemma \ref{app}.
\item Let $G:=\Aut(PSL(2,8)) = P \Gamma L(2,8)$. We are going to show that $\sigma(G)=29$ and that there exists only one minimal cover of $G$. \\
$G$ is an almost simple group of order $1512=2^3 \cdot 3^3 \cdot 7$. $PSL(2,8)$, its non-trivial proper normal subgroup, is a maximal subgroup of sum $36$, thus if $\sigma(G)<36$ then $PSL(2,8)$ appears in every minimal cover. \\
Now, since $\soc(G)$ together with the normalizers of the $3$-Sylow subgroups of $PSL(2,8)$ form a cover of $G$ consisting of $29$ subgroups, we have $\sigma(G) \leq 29$ so $\soc(G)$ appears in every minimal cover of $G$. The only maximal subgroups of $G$ which contain elements of order $9$ are $\soc(G)=PSL(2,8)$ and the normalizers of the $3$-Sylow subgroups of $PSL(2,8)$. It follows that if $P$ is a $3$-Sylow subgroup of $\soc(G)=PSL(2,8)$ then $N_G(P)$ is the only maximal subgroup of $G$ which contains the elements of order $9$ in $N_G(P)-P$. So the $28$ normalizers of the $3$-Sylow subgroups of $\soc(G)$ appear in every minimal cover. So $\sigma(G)=29$.
\item $\Sym(10)$. Its maximal subgroups (cf. \cite{atlas}) are:
\begin{itemize}
\item $\Sym(9)$;
\item $\Sym(8) \times C_2$;
\item $\Sym(7) \times \Sym(3)$;
\item $(\Sym(5) \times \Sym(5)):2$;
\item $\Sym(6) \times \Sym(4)$;
\item $2^5:\Sym(5)$;
\item $\Alt(6):2^2$.
\end{itemize}
$\Sym(8)$ and $\Sym(9)$ do not have elements of order $21$. Thus $\Sym(8) \times C_2$ has no elements of order $21$, and the only maximal subgroups of $\Sym(10)$ which contain elements of order $21$ are of the kind $\Sym(7) \times \Sym(3)$. Now $\Sym(10)$ has $6! \cdot \binom{10}{7} \cdot 2! = 172800$ elements of order $21$, and $\Sym(7) \times \Sym(3)$ has $6! \cdot 2! = 1440$ such elements, so in order to cover the elements of order $21$ we need at least $172800/1440 = 120$ proper subgroups. Therefore $\sigma(\Sym(10)) \geq 120$.
\item $M_{12}$. It admits $PSL(2,11)$ as a maximal non normal subgroup of sum $67$ and index $144$, so $\sigma(M_{12}) \geq 67$ by lemma \ref{app}.
\item $\Sym(12)$. It admits $PGL(2,11)$ as a maximal non normal subgroup of sum $67$ and index $9!$, so $\sigma(\Sym(12)) \geq \sigma(PGL(2,11))=67$ by lemma \ref{app}.
\item $PSL(3,3)$. It has $1728$ elements of order $13$, and its maximal subgroups which contain elements of order $13$ are of the kind $C_{13} \rtimes C_3$, and such subgroups are $144$. Since the $C_{13} \rtimes C_3$ have $12$ elements of order $13$, in order to cover the elements of order $13$ we need at least $1728/12=144$ proper subgroups. In particular $\sigma(G) \geq 144$.
\item $G:=PSL(2,16):2$ admits $PSL(2,16)$ as a maximal and normal subgroup. The maximal subgroups of $G$ are:
\begin{itemize}
\item $17$ occurs of $((2^4.5).3).2$;
\item $120$ occurs of $17.4$;
\item $68$ occurs of $C_2 \times \Alt(5)$;
\item $1$ occur of $PSL(2,16)$;
\item $136$ occurs of $\Sym(3) \times D_{10}$.
\end{itemize}
The only maximal subgroups which contain elements of order $10$ are $C_2 \times \Alt(5)$ (which has $24$ elements of order $10$) and $\Sym(3) \times D_{10}$ (which contains $12$ elements of order $10$), and since $G$ contains $1632$ elements of order $10$, we need at least $1632/24=68$ proper subgroups to cover the elements of order $10$. In particular $\sigma(G) \geq 68$.
\item $G:=P \Gamma L(2,16) = PSL(2,16) \rtimes C_4$. The maximal subgroups of $G$ are:
\begin{itemize}
\item $17$ occurs of $((2^4.5).3).4$;
\item $136$ occurs of $(5.4) \times \Sym(3)$;
\item $68$ occurs of $\Alt(5).4$;
\item $120$ occurs of $17.8$;
\item $1$ occur of $PSL(2,16) \rtimes C_2$.
\end{itemize}
The only maximal subgroups which contain elements of order $12$ are $(5.4) \times \Sym(3)$ (which contains $20$ elements of order $12$) and $\Alt(5).4$ (which contains $40$ elements of order $12$), so to cover the elements of order $12$ (which are $2720$) we need at least $2720/40=68$ proper subgroups, so that $\sigma(G) \geq 68$.
\item $PSL(3,4)=M_{21}$. The only maximal subgroup of $PSL(3,4)$ which contains elements of order $7$ is $PSL(2,7)$, which contains $48$ elements of order $7$. Since $PSL(3,4)$ has $5760$ elements of order $7$, the sum of $PSL(3,4)$ is at least $5760/48=120$.
\item $P \Sigma L(3,4)=PSL(3,4):2$. The only maximal subgroup of $P \Sigma L(3,4)$ which contains elements of order $14$ is $PSL(2,7) \times C_2$, which contains $48$ elements of order $14$. Since $P \Sigma L(3,4)$ has $5760$ elements of order $14$, the sum of $P \Sigma L(3,4)$ is at least $120$.
\item $PGL(3,4)$. The only maximal subgroup of $PGL(3,4)=PSL(3,4):3$ which contains elements of order $21$ is $(7:3) \times 3$, which contains $12$ elements of order $21$. Since $PGL(3,4)$ contains $11520$ elements of order $21$, the sum of $PGL(3,4)$ is at least $11520/12=960$.
\item $P \Gamma L(3,4)$. It is a $3$-sum group.
\item $M_{22}$. It contains $80640$ elements of order $11$, and the only maximal subgroup of $M_{22}$ which contains elements of order $11$ is $PSL(2,11)$, which contains $120$ such elements. Thus the sum of $M_{22}$ is at least $80640/120 = 672$.
\item $M_{22}:2$. It admits $PGL(2,11)$ as a maximal and non normal subgroup of index $672$ and sum $67$, so $\sigma(M_{22}:2) \geq 67$ by lemma \ref{app}.
\item $M_{23}$. It admits $\Alt(8)$ as a maximal non normal subgroup of index $506$ and sum $\geq 64$, so $\sigma(M_{23}) \geq 64$ by lemma \ref{app}.
\item $M_{24}$. It admits $PSL(2,23)$ as a maximal non normal subgroup of sum $277$ and index $40320$, so $\sigma(M_{24}) \geq 277$ by lemma \ref{app}.
\end{itemize}
\subsection{Primitive groups with abelian socle}
The sums or some bounds on the sums of the primitive groups of degree up to $24$ and abelian socle are summarized in the following tabular. The solvable primitive groups which are not $\sigma$-elementary and the cyclic groups are omitted. If $p$ is a prime, the subgroups of $AGL(1,p)$ which contain the socle $\mathbb{F}_p$ are $p$-primitive and $\sigma$-elementary of sum $p+1$ (this follows from section \ref{solv}); they are omitted.
$$\begin{tabular}{|c|c|}
\hline $\deg$ & $\sigma$ \\ \hline $4$ & $\sigma(\Alt(4))=5.$ \\ \hline $8$ & $\sigma(AGL(1,8))=9; \sigma(ASL(3,2))=15.$ \\ \hline $9$ & $\sigma(3^2:4)=\sigma(AGL(1,9))=10.$ \\
\hline $16$ & $\sigma(AGL(4,2)) \geq 31, \sigma(A \Gamma L(2,4)) \leq 4, \sigma(ASL(2,4):2) \leq 16,$ \\ & $\sigma(AGL(2,4)) \leq 10, \sigma(ASL(2,4)) \leq 10, \sigma(2^4:\Sym(6)) \leq 13,$ \\ & $\sigma(2^4:\Alt(6)) \leq 16, \sigma(2^4:\Sym(5)) \leq 16, \sigma(2^4:\Alt(5)) \leq 10,$ \\ & $\sigma(2^4:\Alt(7)) = 31, \sigma(2^4:5)=\sigma(AGL(1,16))=17.$ \\ \hline
\end{tabular}$$
To show this recall theorem \ref{risol}: if a non-cyclic solvable and primitive monolithic group $G$ is such that $G/\soc(G)$ is cyclic then $G$ is $\sigma$-elementary and $\sigma(G)=|\soc(G)|+1$. This solves all the cases except for $ASL(3,2)$ and the $16$-primitive groups. We have $\sigma(ASL(3,2))=15$ by proposition \ref{cohom} because $H^1(SL(3,2),{\mathbb{F}_2}^3)=C_2$ (in fact in \cite{coh} it is shown that $H^1(GL(d,2),{\mathbb{F}_2}^d)=0$ if $d \neq 3$ and $H^1(SL(3,2),{\mathbb{F}_2}^3)=C_2$) and $\sigma(SL(3,2))=15$ (by \cite{BFS}). The $16$-primitive groups with abelian socle are of the kind $V \rtimes H$ with $H \leq GL(V)$ and $V={C_2}^4$. In almost all the cases the bounds given in the tabular are of the kind ``$\sigma(V \rtimes H) \leq \sigma(H)$'' using for $\sigma(H)$ the information collected in the previous section. Only two cases require more attention:
\begin{itemize}
\item $G:=AGL(4,2)={\mathbb{F}_2}^4 \rtimes GL(4,2)$. We want to show that $\sigma(G) \geq 31$. We observe that $GL(4,2) \cong \Alt(8)$ and $G$ is monolithic. Let $V:={\mathbb{F}_2}^4$ and $H:=GL(4,2)$, so that $G=V \rtimes H$. Since $\sigma(\Alt(8)) \geq 69$, if (as we can suppose) $\sigma(G)<69$ then every complement of $V$ must appear in every minimal cover $\{M_1,...,M_n\}$ of $G$, where $n=\sigma(G)$. We have $H^1(H,V)=0$ by the result in \cite{coh}, so that $V$ has exactly $16$ complements in $G$, let them be $M_1,...,M_{16}$. If $g \in GL(4,2)$ stabilizes a non-zero vector $v \in V$ then the function ${\mathbb{F}_2}^4 \to {\mathbb{F}_2}^4$ which sends $x$ to $x^g-x$ is not injective (having $v$ in its kernel), so it is not surjective: there exists $w \in V$ such that $x^g-x \neq w$ for every $x \in V$. In this case $wg \in G$ does not belong to any complement of $V$, because the complements of $V$ are conjugate to $H$, and $wg \in H^x$ means $g \in H$ and $w=x-x^g$. Thus $wg$ must lie in at least one $M_i$ with $i \geq 17$, so $g$ must belong to it, because the maximal subgroups which do not complement $V$ must contain it ($V$ is the only minimal normal subgroup of $G$). It turns out that $M_{17}/V,...,M_n/V$ must cover all the point stabilizers of $GL(4,2)$. If $v$ is a non-zero vector then the point stabilizer of $v$ in $GL(4,2)$ is isomorphic to $ASL(3,2)$, which is a $15$-sum group. So either every point stabilizer is one of the $M_i/V$ with $i \geq 17$ or the $M_i/V$ with $i \geq 17$ are at least $15$. In any case the $M_i/V$ with $i \geq 17$ are at least $15$, so $\sigma(G) \geq 15+16=31$.
\item $G:={\mathbb{F}_2}^4 \rtimes \Alt(7)$. Let $V:={\mathbb{F}_2}^4$ and $H:=\Alt(7)$. We want to show that $\sigma(G) \geq 31$. Suppose by contradiction that $\sigma(G) \leq 30$, so that all the complements of $V$ appear in every minimal cover of $G$ (because $\sigma(\Alt(7))=31$). Since $\sigma(G) \leq \sigma(H)=31$ we have $H^1(H,V)=0$ (otherwise we would have at least $32$ complements of $V$). Let $M_1,...,M_{16}$ be the $16$ complements of $V$ in $G$. Since $H$ acts faithfully on $16$ elements, we have an injection $\Alt(7) \to \Sym(16)$, and a $7$-cicle $h$ of $\Alt(7)$ must fix a non-zero vector in this action (the image of a $7$-cycle in $\Sym(16)$ is either a $7$-cycle or a product of two disjoint $7$-cycles), let $v$ be this vector. Then $vh \in G$ does not belong to any complement of $H$ because it has order $14$, and $H$ has no elements of order $14$. Thus $vh$ lies in a $M_i$ with $i \geq 17$, and every such $M_i$ contains $V$ (because it does not complement it), so $M_{17}/V,...,M_n/V$ contain together all the $7$-cycles. But to cover the $7$-cycles in $\Alt(7)$ we need at least $15$ subgroups, because the $7$-cycles in $\Alt(7)$ are $6!$ and the only maximal subgroups of $\Alt(7)$ which contain $7$-cycles are the $SL(3,2)$, and they contain $48$ $7$-cycles. We deduce that $\sigma(G) \geq 16+15=31$, contradiction. In particular since $\sigma(G) \leq \sigma(\Alt(7))=31$, $G$ is a non-$\sigma$-elementary $31$-sum group.
\end{itemize}
\section{The $n$-elementary groups with $n \leq 25$} \label{nel}
If $X$ is a primitive monolithic group and $N:=\soc(X)$, we denote by $l_X(N)$ the minimal index of a proper supplement of $N$ in $X$. We recall that $X$ is $l_X(N)$-primitive. In particular if $N$ is abelian then the supplements of $N$ are in fact complements, so in this case $l_X(N)=|N|$ and $X$ is $|N|$-primitive. \\
The following lemma summarizes some known results which can be found in \cite{lucchini}.
\begin{lemma} \label{tanto}
If $G$ is an abelian non-cyclic $\sigma$-elementary group then $G \cong C_p \times C_p$ and $\sigma(G)=p+1$, for some prime $p$. If $G$ is non-abelian, $G_1,...,G_n$ are its minimal normal subgroups and $X_1,...,X_n$ are the associated primitive monolithic groups then:
\begin{enumerate}
\item $\sigma(G) \geq \sum_{i=1}^n l_{X_i}(G_i)$.
\item If $G_1$ is abelian then $\frac{1}{2} \sigma(G) < |G_1| \leq \sigma(G)-1$.
\end{enumerate}
\end{lemma}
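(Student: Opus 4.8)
I would organise the proof into the abelian case and then the two assertions of the non-abelian case.

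\emph{Abelian case.} A finite non-cyclic abelian group $G$ surjects onto $C_p\times C_p$, where $p$ is the least prime whose Sylow subgroup in $G$ is non-cyclic; fix $N\unlhd G$ with $G/N\cong C_p\times C_p$. Since $G$ is solvable, Theorem \ref{tom} gives $\sigma(G)=q+1$ with $q$ the least order of a chief factor of $G$ admitting more than one complement. On one hand $\sigma(G)\le\sigma(C_p\times C_p)=p+1$, a minimal cover of $C_p\times C_p$ being its $p+1$ subgroups of order $p$ pulled back along $G\to G/N$. On the other hand a chief factor of order $r<p$ cannot have two complements: if $S/K\cong C_r$ had a complement $B/K$ in the abelian group $G/K$, the number of complements would be $|\Hom(B/K,C_r)|$, which exceeds $1$ only if the Sylow $r$-subgroup of $G/K$ is non-cyclic, impossible since that group is a quotient of the Sylow $r$-subgroup of $G$, which is cyclic by minimality of $p$. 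Hence $q=p$, so $\sigma(G)=p+1=\sigma(G/N)$, and $\sigma$-elementarity forces $N=1$, i.e. $G\cong C_p\times C_p$.

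\emph{Item 1, monolithic case.} If $n=1$ then by Theorem \ref{sigmaprim}(3) $G$ equals $X_1$, a $\sigma$-elementary monolithic primitive group with socle $N:=G_1$. Let $\mathcal M$ be a minimal cover of $G$ by maximal subgroups. The members of $\mathcal M$ containing $N$ cannot cover $G$, for otherwise their images cover $G/N$ with fewer than $\sigma(G)$ proper subgroups, contradicting $\sigma(G)<\sigma(G/N)$. So some coset $gN$ lies entirely outside their union and is therefore covered by the members $M\in\mathcal M$ with $N\not\le M$; each such $M$ is a proper supplement of $N$, so $[G:M]\ge l_{X_1}(G_1)$, while $|M\cap gN|=|M\cap N|=|N|/[G:M]$. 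Covering the $|N|$ elements of $gN$ thus requires at least $l_{X_1}(G_1)$ members of $\mathcal M$, whence $\sigma(G)\ge l_{X_1}(G_1)$.

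\emph{Item 1, general case.} The plan is to run the coset argument once per index $i$ inside the quotient $G/\prod_{k\ne i}G_k$, which by Theorem \ref{sigmaprim}(3) maps onto $X_i$, so as to exhibit at least $l_{X_i}(G_i)$ members of a minimal cover that contain $\prod_{k\ne i}G_k$ but not $G_i$. The families so obtained are pairwise disjoint, since a subgroup containing both $\prod_{k\ne i}G_k$ and $\prod_{k\ne j}G_k$ with $i\ne j$ would contain $G_i$; summing then yields $\sigma(G)\ge\sum_{i=1}^{n}l_{X_i}(G_i)$. I expect this step to be the main obstacle: one must verify that a minimal cover of $G$ genuinely induces, in each such quotient, enough subgroups of the prescribed shape — that is, that the ``$G_i$-component'' of the covering problem cannot be absorbed by members of $\mathcal M$ which fail to contain $\prod_{k\ne i}G_k$ (whose images in the quotient are improper). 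Making this precise is where the full strength of the subdirect-product structure of Theorem \ref{sigmaprim}(3) is needed, and is the delicate part carried out in \cite{lucchini}.

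\emph{Item 2.} Assume $G_1$ is abelian. By Theorem \ref{sigmaprim}(1) $Z(G)=1$, and since $G_1$ is non-Frattini we may write $G=G_1\rtimes H$ with $C_{G_1}(H)=1$; as the socle is abelian, $l_{X_1}(G_1)=|G_1|$. For $|G_1|\le\sigma(G)-1$: the coset argument above applied to $N=G_1$ already forces at least $|G_1|$ members of a minimal cover not containing $G_1$; since $G_1\setminus\{1\}$ must also be covered and a maximal subgroup meets the minimal normal subgroup $G_1$ either in $1$ or in $G_1$, at least one further member contains $G_1$, and these two families are disjoint, so $\sigma(G)\ge|G_1|+1$. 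For $\sigma(G)<2|G_1|$: if $H^1(H,G_1)\ne0$ then $\sigma(G)=\sigma(H)$ by Proposition \ref{cohom}(1), and $\sigma(H)<2|G_1|$ by the contrapositive of Proposition \ref{cohom}(2); if $H^1(H,G_1)=0$ then $G_1$ has exactly $|G_1|$ complements in $G$, and these together with the at most $|G_1|-1$ preimages of the point stabilizers of $H$ on $G_1\setminus\{1\}$ — proper subgroups of $H$, since $C_{G_1}(H)=1$ — form a cover of $G$ of size at most $2|G_1|-1$: the preimages catch every element in a coset $hG_1$ with $h$ fixing a non-zero vector, and when $h$ fixes no non-zero vector the endomorphism $g\mapsto hgh^{-1}g^{-1}$ of $G_1$ is bijective, so every element of $hG_1$ lies in one of the $|G_1|$ complements. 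In either case $\tfrac12\sigma(G)<|G_1|$.
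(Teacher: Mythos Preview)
The paper does not prove this lemma at all; the sentence immediately preceding it reads ``The following lemma summarizes some known results which can be found in \cite{lucchini}.''  So there is no paper-proof to compare against, and your write-up already goes considerably further than the paper does.

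Where you give complete arguments they are correct.  The abelian case is fine: your use of Tomkinson together with the observation that a chief factor of order $r<p$ in an abelian group with cyclic Sylow $r$-subgroup has a unique complement pins down $\sigma(G)=p+1$, and $\sigma$-elementarity then kills the kernel.  The monolithic instance of Item~1 is the standard coset-counting argument and is carried out cleanly.  Both halves of Item~2 are also correct: for $|G_1|\le\sigma(G)-1$ your point that a maximal subgroup meets the abelian minimal normal subgroup $G_1$ in $1$ or $G_1$ (so the complements cannot cover $G_1\setminus\{1\}$) is exactly what is needed, and for $\sigma(G)<2|G_1|$ your dichotomy on $H^1(H,G_1)$ is right --- in the vanishing case the explicit cover by the $|G_1|$ complements together with the subgroups $G_1\cdot C_H(w)$, $w\ne 0$, does work, since an $h\in H$ with $C_{G_1}(h)=1$ makes $u\mapsto u-u^h$ a bijection of $G_1$.

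For the general (non-monolithic) case of Item~1 you honestly flag the gap and defer to \cite{lucchini}, which is precisely what the paper itself does for the entire lemma.  Your suggested strategy --- isolating, for each $i$, a family of cover members containing $\prod_{k\ne i}G_k$ but not $G_i$ --- is plausible heuristically, but as you anticipate the obstruction is genuine: members of a minimal cover need not contain any $\prod_{k\ne i}G_k$, and their images in the quotient $G/\prod_{k\ne i}G_k$ may be improper, so one cannot simply run the monolithic coset argument in each quotient separately.  Turning this into a rigorous count is indeed the content of the argument in \cite{lucchini}.
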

With the following lemma we reduce our study to monolithic groups.
\begin{lemma} \label{33}
The non-abelian $\sigma$-elementary groups of sum $\leq 33$ are primitive and monolithic.
\end{lemma}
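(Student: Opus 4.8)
The plan is to first reduce the assertion to the statement ``$G$ has a unique minimal normal subgroup'', and then to rule out the existence of two or more minimal normal subgroups by a numerical argument combining Lemma \ref{tanto} with the monotonicity of $\sigma$ under quotients.

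\emph{Reduction.} By Theorem \ref{sigmaprim} we already know $\Phi(G)=Z(G)=1$. Suppose $G$ is monolithic with socle $N$. Since $N\not\leq\Phi(G)=1$, there is a maximal subgroup $M$ with $N\not\leq M$; the core $\bigcap_{g\in G}M^g$ is a normal subgroup of $G$ which, $G$ being monolithic, must be trivial (otherwise it would contain $N$ and hence $N\leq M$). So $M$ is core-free and $G$ is primitive. Thus it suffices to prove that a non-abelian $\sigma$-elementary group of sum at most $33$ is monolithic.

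\emph{Set-up and numerology.} Assume for contradiction that $\soc(G)=G_1\times\cdots\times G_n$ with $n\geq 2$, and let $X_1,\dots,X_n$ be the associated primitive monolithic groups. By Theorem \ref{sigmaprim} each $X_i$ is an epimorphic image of $G$, say $X_i=G/K_i$, and since $n\geq 2$ the kernel $K_i$ is non-trivial: indeed $K_i$ contains (an isomorphic copy of) any $G_j$ with $j\neq i$, using that distinct minimal normal subgroups commute. Hence each $X_i$ is a \emph{proper} quotient, so $\sigma(G)<\sigma(X_i)$ for every $i$ because $G$ is $\sigma$-elementary. On the other hand Lemma \ref{tanto}(1) gives $\sigma(G)\geq\sum_{i=1}^n l_{X_i}(G_i)$. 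Now the smallest non-abelian simple group is $\Alt(5)$, so any $X_i$ with non-abelian socle has $l_{X_i}(G_i)\geq 5$ and $\sigma(X_i)\geq\sigma(\Alt(5))=10$; by Theorem \ref{sigmaprim}(2) at most one $G_i$ is abelian, and for it $l_{X_i}(G_i)=|G_i|>\sigma(G)/2$ by Lemma \ref{tanto}(2). Feeding these into $\sum_i l_{X_i}(G_i)\leq\sigma(G)\leq 33$ bounds $n$ (one gets $n\leq 6$, and $\sigma(G)>10(n-1)$ hence $n\leq 4$ when an abelian factor is present), and, combined with $\sigma(G)<\min_i\sigma(X_i)$, forces each $X_i$ to lie in an explicit short list of primitive monolithic groups of small sum: on the non-abelian side $\Alt(5)$ $(\sigma=10)$, $\Sym(6)$ $(13)$, $SL(3,2)$ $(15)$, $\Sym(5)$ and $\Alt(6)$ $(16)$, $M_{11}$ $(23)$, $PSL(2,7)$, $PGL(2,7)$, $P\Gamma L(2,8)$ $(29)$, $\Alt(7)$ $(31)$, \dots, and on the abelian side $\Alt(4)$, $\Sym(4)$ $(5)$, the groups between $\mathbb{F}_q$ and $AGL(1,q)$ $(q+1)$, $ASL(3,2)$ $(15)$, \dots, all of whose sums are recorded in Sections \ref{solv} and \ref{prim} and the references quoted there. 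In particular $n=2$ in every surviving case, with $G$ a subdirect product $G\leq X_1\times X_2$ and $X_1,X_2$ drawn from this finite list.

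\emph{The finite check and the main obstacle.} It remains to examine the finitely many such pairs $(X_1,X_2)$ and show no corresponding subdirect $G$ is $\sigma$-elementary. Goursat's lemma cuts the candidates down sharply: a subdirect subgroup of $X_1\times X_2$ having $G_1$ and $G_2$ as minimal normal subgroups is determined by an isomorphism between a common quotient of $X_1/G_1$ and of $X_2/G_2$, and for the groups on the list these common quotients are trivial or $C_2$, so $G$ is the full direct product or a subdirect subgroup of index at most $2$. For each such $G$ one then wants a lower bound $\sigma(G)\geq\sigma(X_j)$, where $X_j$ realizes $\min(\sigma(X_1),\sigma(X_2))$; since $X_j$ is a quotient of $G$ we also have $\sigma(G)\leq\sigma(X_j)$, so this would give $\sigma(G)=\sigma(X_j)$, contradicting the strict inequality $\sigma(G)<\sigma(X_j)$ imposed by $\sigma$-elementarity. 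I expect this last step to be the real obstacle: Lemma \ref{tanto}(1) alone only yields $\sigma(G)\geq l_{X_1}(G_1)+l_{X_2}(G_2)$, which is typically a few units short (e.g. for $G\leq\Sym(6)\times\Sym(5)$ or $G=ASL(3,2)\times\Sym(5)$), so the missing inequalities have to be produced by hand, using the same element-order counting and maximal-subgroup analysis employed throughout Section \ref{prim} — for instance, taking a hypothetical minimal cover of $G$, projecting it onto $X_j$, and forcing enough of its members to contain $K_j=C_G(G_j)$, or exhibiting elements of a prescribed order lying in too few maximal subgroups of $G$. Once this is carried out for each pair on the list, we conclude $n=1$, and the reduction above then upgrades ``monolithic'' to ``primitive'', completing the proof.
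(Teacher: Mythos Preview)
Your reduction to ``$G$ monolithic'' and the use of Lemma~\ref{tanto} are fine, but the core of the argument is missing: you reduce to a finite list of subdirect pairs $(X_1,X_2)$ and then admit that proving $\sigma(G)\geq\sigma(X_j)$ for each such $G$ is ``the real obstacle'' and would have to be done by hand. That case analysis is not carried out, and your claim that $n=2$ in every surviving case is also unjustified (for instance, nothing in your numerics rules out three non-abelian factors each with $l=5$ and $X_i=\Sym(5)$).

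The paper avoids this entirely, and the tool you are not using is Proposition~\ref{1e2}: if for some $i$ the primitive monolithic quotient $X_i$ satisfies $X_i=G_i$ or $|X_i/G_i|$ prime, then $G=X_i$, so $n=1$ immediately --- no pairs to examine. The paper's argument is then simply to check that every primitive monolithic group $X$ with non-abelian socle and $l_X(\soc X)\leq 16$ has $X/\soc X$ of order $1$ or a prime; the only exceptions at degrees $6$ and $10$ are $\Aut(\Alt(6))$ and $P\Gamma L(2,9)$, and these are excluded because they have $\sigma=3$, which is incompatible with $\sigma(G)\geq l_{X_i}(G_i)\geq 6$ via the quotient inequality. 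Hence whenever $n\geq 2$ every non-abelian $G_i$ has $l_{X_i}(G_i)\geq 17$, and now the numerical bound you already wrote down finishes at once: if all $G_i$ are non-abelian then $33\geq 17n$ forces $n=1$, and if $G_1$ is abelian then $|G_1|\leq\sigma(G)-17$ together with $|G_1|>\sigma(G)/2$ gives $\sigma(G)>34$. No enumeration of subdirect products is needed.
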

\begin{proof}
Let $G$ be a non-abelian $\sigma$-elementary group. Let $G_1,...,G_n$ be the minimal normal subgroups of $G$, and let $X_1,...,X_n$ be the primitive monolithic groups associated to $G_1,...,G_n$ respectively. We have then $\sigma(G) \geq \sum_{i=1}^n l_{X_i}(G_i)$, and $l_{X_i}(G_i)=|G_i|$ if $G_i$ is abelian. We may suppose by theorem \ref{sigmaprim} (2) that $G_2,...,G_n$ are not abelian. \\
Consider a non-abelian minimal normal subgroup $G_i$. In the following discussion we will apply proposition \ref{1e2}.
\begin{itemize}
\item If $t:=l_{X_i}(G_i) \leq 16$ and $t \neq 6,10$ then since $X_i$ is $t$-primitive we immediately see that $X_i/G_i$ has order $\leq 3$, so either $X_i=G_i$ or $X_i/G_i$ is a prime, and this implies $G=X_i$ and $n=1$.
\item If $l_{X_i}(G_i)=6$ then $X_i$ is $6$-primitive so $G_i \in \{\Alt(5),\Alt(6)\}$. If $G_i=\Alt(5)$ then $G \in \{\Alt(5),\Sym(5)\}$ and $n=1$, otherwise if $G_i=\Alt(6)$ then $X_i$ is a subgroup of $\Aut(\Alt(6))$ containing $\Alt(6)$, so either $X_i/G_i$ has order $\leq 2$, and if this is the case $G=X_i$ and $n=1$, or $X_i=\Aut(\Alt(6))$ and $X_i$ is a $3$-sum group since it admits $C_2 \times C_2$ as an epimorphic image. Since $X_i$ is a quotient of $G$, this implies that $\sigma(G)=3$. This contradicts $\sigma(G) \geq l_{X_i}(G_i)$.
\item If $l_{X_i}(G_i)=10$ then $$X_i \in \{\Alt(5),\Sym(5),PSL(2,9),PGL(2,9),$$ $$\Sym(6),M_{10},P \Gamma L(2,9),\Alt(10),\Sym(10)\}.$$Now since $l_{X_i}(G_i) \leq \sigma(G)$, and since $X_i$ is a quotient of $G$ we cannot have $X_i=P \Gamma L(2,9)$ because $\sigma(P \Gamma L(2,9))=3$; it follows that $X_i/G_i \in \{1,C_2\}$ so $G=X_i$ and $n=1$.
\end{itemize}
We deduce that if $G_i$ is non-abelian then either $n=1=i$ or $l_{X_i}(G_i) \geq 17$. In particular if $G_1$ is non-abelian and $\sigma(G) \leq 33$ then $n=1$ since $33 \geq \sigma(G) \geq \sum_{i=1}^n l_{X_i}(G_i) \geq 17n$. Let us suppose that $G_1$ is abelian. If $n \geq 2$ then $l_{X_i}(G_i) \geq 17$ for $i \geq 2$, so $|G_1| \leq \sigma(G)-17$ and $\sigma(G)-17 \geq |G_1| > \frac{1}{2} \sigma(G)$, thus $\sigma(G) > 34$. \\
We deduce that if $G$ is $\sigma$-elementary, non-abelian and non monolithic then $\sigma(G) \geq 34$.
\end{proof}
Now the study of $n$-primitive groups with $3 \leq n \leq 25$ is easy. Since abelian non-cyclic $\sigma$-elementary groups are $(p+1)$-elementary and isomorphic to $C_p \times C_p$ for some prime $p$, we will look for non-abelian $\sigma$-elementary groups. Since every non-abelian $n$-elementary group with sum $n \leq 25$ is primitive of degree $\leq n-1$, it sufficies to look at the tabulars of section \ref{prim} to obtain the following tabular, in which there are for every sum $n$ all the $n$-elementary groups.
$$
\begin{tabular}{|c|c|}
\hline $\mbox{sum}$ & $\mbox{groups}$ \\ \hline $3$ & $C_2 \times C_2$ \\ \hline $4$ & $C_3 \times C_3,\Sym(3)$ \\ \hline $5$ & $\Alt(4)$ \\ \hline $6$ & $C_5 \times C_5,D_{10},AGL(1,5)$ \\ \hline $7$ & $\emptyset$ \\ \hline $8$ & $C_7 \times C_7,D_{14},7:3,AGL(1,7)$ \\ \hline $9$ & $AGL(1,8)$ \\ \hline $10$ & $3^2:4,AGL(1,9),\Alt(5)$ \\ \hline $11$ & $\emptyset$ \\ \hline $12$ & $C_{11} \times C_{11},11:5,D_{22},AGL(1,11)$ \\ \hline $13$ & $\Sym(6)$ \\ \hline $14$ & $C_{13} \times C_{13},D_{26},13:3,13:4,13:6,AGL(1,13)$ \\ \hline $15$ & $SL(3,2)$ \\ \hline $16$ & $\Sym(5),\Alt(6)$ \\ \hline $17$ & $2^4:5,AGL(1,16)$ \\ \hline $18$ & $C_{17} \times C_{17},D_{34},17:4,17:8,AGL(1,17)$ \\ \hline $19$ & $\emptyset$ \\ \hline $20$ & $C_{19} \times C_{19},AGL(1,19),D_{38},19:3,19:6,19:9$ \\ \hline $21$ & $\emptyset$ \\ \hline $22$ & $\emptyset$ \\ \hline $23$ & $M_{11}$ \\ \hline $24$ & $C_{23} \times C_{23},D_{46},23:11,AGL(1,23)$ \\ \hline $25$ & $\emptyset$ \\ \hline
\end{tabular}
$$

\textit{Martino Garonzi, Dipartimento di Matematica Pura ed Applicata, Via Trieste 63, 35121 Padova, Italy. E-mail address: mgaronzi@studenti.math.unipd.it}
\end{document}